\newtheorem{theorem}{Theorem}[section]
\newtheorem{lemma}[theorem]{Lemma}
\newtheorem{proposizione}[theorem]{Proposition}
\newtheorem{corollario}[theorem]{Corollary}
\newtheorem{osservazione}[theorem]{Remark}
\newtheorem{definition}[theorem]{Definition}
\newtheorem{controes}[theorem]{Example}
\newtheorem{rem}[theorem]{Remark}
\newtheorem{as}[theorem]{Assumption}
\newtheorem{ass}[theorem]{Assumptions}
\newcommand{\iE}{\mbox{$\displaystyle{\int^*_E}$}}
\newcommand{\iAm}{\mbox{$\displaystyle{\int^*_{A_m}}$}}
\newcommand{\iS}{\mbox{$\displaystyle{\int^*_{S}}$}}
\newcommand{\iO}{\mbox{$\displaystyle{\int^*_{X}}$}}
\newcommand{\vuoto}{{\rm \O}}
\newcommand{\enne}{\mathbb{N}}
\newcommand{\erre}{\mathbb{R}}
\newcommand{\As}{$\mathcal A$~}
\newcommand{\Es}{\mbox{$\mathcal E$}}
\newcommand{\WEs}{\mbox{$W(\Es)$}}
\newcommand{\CEs}{\mbox{$C(\Es)$}}
\newcommand{\LCEs}{\mbox{$LC(\Es)$}}
\newcommand{\mint}{\displaystyle{\int^*}}
\newcommand{\mintX}{\displaystyle{\int_X^*}}
\newcommand{\mintA}{\displaystyle{\int_A^*}}
\newcommand{\mintS}{\displaystyle{\int_S^*}}
\newcommand{\mintB}{\displaystyle{\int_B^*}}
\newcommand{\mintEi}{\displaystyle{\int_{E_i}^*}}
\newcommand{\mintSEi}{\displaystyle{\int_{S\cap E_i}^*}}
\newcommand{\mintAEi}{\displaystyle{\int_{A\cap E_i}^*}}
\newcommand{\sommar}{\sum_{i=1}^r}
\newcommand{\uno}{{\bf 1}}
\numberwithin{equation}{section}
\newcommand{\acr}{\newline\indent}
\begin{document}
\title[The Choquet integral with respect to ...]{\bf  The Choquet integral with respect to fuzzy measures and applications}
\author[A. R. Sambucini]{\bf Anna Rita Sambucini}

\address{\llap{*\,}Department of Mathematics and Computer Sciences\acr
1, Via Vanvitelli  \acr 06123-I  Perugia \acr (ITALY)}
\email{anna.sambucini@unipg.it}
\thanks{This work was supported by   University of Perugia - Department of Mathematics and Computer Sciences - Grant Nr 2010.011.0403  and by the Grant   N.  U UFMBAZ2017/0000326 of GNAMPA - INDAM (Italy).\acr
Anna Rita Sambucini orcid ID: 0000-0003-0161-8729
}

\subjclass{ Primary: 28E10, 28A25, 28B20, 91B50, 91B54}
\keywords{Choquet integral, fuzzy measure, core of an economy, Walrasian equilibrium}

\begin{abstract}
Fuzzy measures and Choquet asymmetric integral are considered here. 
As an application to economics some Core-Walras results are given.
\end{abstract}
\maketitle

\section*{Introduction}\label{introduzione}
Fuzzy measures  are used in economics, probability, theory of control and have been investigated by
many authors \cite{cdl,MuSu,D,pap2,mesiar,JKK,pap1,HS,JK,SC,bs1996,bdpm}.
The aim of the present paper is  to search  equilibria for pure exchange economies 
in  finite dimensional commodity spaces, but with  a more general structure for the set of agents $X$. 
After the framework proposed by Aumann, to represent a perfect competition
assuming that the space of agents is a non-atomic measure space, several extensions have appeared in the literature, see for example \cite{AR,Hilde}.
Some authors have considered finitely additive models, particularly in the coalitional sense (\cite{AR,B,BG,dms,BDG2009,BDG2010,V,BG2013,bgp2014}).
More recently finitely additive settings are examined in  \cite{MS0,MS1,MS2, MS3} and finitely additive economies with infinite dimensional commodity space have been investigated from the individualistic point of view  (\cite{AM1,AM4,M,M1}).
\acr
Here  the set of agents $X$ is equipped with a fuzzy measure $\mu$ representing the economic  weight of the  coalitions in the market, to compensate in some way  the loss of additivity  some further conditions are assumed.
Almost simple economies are considered; this  assumption is inspired by the models proposed in \cite{GCHB,BG,AM4,M}.
More precisely  the whole population $X $ can be decomposed into finitely many coalitions, and 
the agents within the
same coalition have both the same initial endowment and the same criteria of preference.
\acr
In order to do this the Choquet integral is considered since it permits  to
integrate a function with respect to a non-additive measure. Moreover  the Choquet
integral has been applied successfully in decision making, in artificial
intelligence and economics, see for example \cite{cmmm,bmv,gh,JKK,LIMP,mesiar,MuSu,mm,nt,t}, and it can be applied also in vector lattices (\cite{bs9596,bs1996,bs97,bs00}). \acr
The organization of the paper is as follows:  in Section \ref{choquet}  the properties of the monotone
integral are recalled and  new results on it are given while  in Section \ref{walras}  the main economic concepts to a fuzzy model are readapted and   competitive equilibria are compared
 with the core of the economy:   competitive equilibria will
belong to the core, for the converse it is proven that some particular allocations in the core are in fact competitive equilibria.
A particular model where the initial endowment turns out to be an equilibrium is also provided.
\acr
Finally  some particular cases are investigated, in which preferences are represented with concave,
 increasing functions:  characterizations of the core are given, and in some situations perfect equivalence Core-Walras is obtained. 
\section{Preliminaries and definitions}\label{preliminari}

\par \noindent
Let $(X, \mathcal{A})$ be a measurable space, where $\mathcal{A}$ is a $\sigma$-algebra.
\begin{definition}\rm (Murofushi and Sugeno \cite{MuSu})
A {\it fuzzy measure}  on a measurable space $(X, \mathcal{A})$  is a set function 
$\mu: \mathcal{A} \rightarrow \mathbb{R}^0_+$ with the properties:
 $\mu(\emptyset) = 0$; \quad  $\mu (X) < +\infty$;
 if $A \subset B$, then 
$\mu(A) \leq \mu(B)$  (monotonicity).
\end{definition}
Moreover 
\begin{itemize}
\item
$\mu$ is {\it subadditive} if for every $A, B \in \mathcal{A}$, there holds
$\mu(A \cup B) \leq \mu(A) + \mu(B),$
\item 
$\mu$ is a {\it submodular}, if for every $A, B \in \mathcal{A}$, there holds
$\mu(A \cup B) + \mu(A \cap B) \leq \mu(A) + \mu(B).$ In the literature submodularity is called also concavity (see \cite{mm}).
Note that if $\mu$ is a submodular then $\mu$ is subadditive.
\item $\mu$ is {\it continuous from below} if $A_n \in {\mathcal A}, A_{n+1} \subset A_n$ for $n \in \enne$ implies that
$\lim_{n \rightarrow \infty} \mu(A_n) = \mu (\cup_n A_n)$.
\end{itemize}

In classical measure theory it is well-known that if $f,g$ are measurable with $f=g$ 
outside a set of zero $\mu$-measure, then their integrals are equal. 
This is not valid in general for fuzzy measures and for the Choquet integral, 
for an example see \cite{SC}. 
So   the following definitions  of {\em null sets} and {\em almost everywhere} are introduced. 
In any case if  $\mu$ is a  subadditive fuzzy measure then every measurable set of $\mu$-measure zero is a null set 
(\cite[Proposition 5.2]{MuSu}).

\begin{definition}\label{ae} \rm (\cite{MuSu})
A set $N \in $ \As is a {\it null set} 
 (or also {\em $\mu$-null set})
 if $\mu(A \cup N) = \mu(A)$ for every $A \in $ \As. 
Clearly, every null set $N$ has $\mu$-measure zero (it suffices to choose $A=\emptyset$).
Moreover, it is easy to see that the family of all $\mu$-null sets is an ideal in \As and coincides with the family of all sets $N$ such that $\mu(A\setminus N)=\mu(A)$ for all $A\in$ \As.
\end{definition}
\begin{definition}\rm
{\em Almost everywhere} concept is defined  using null sets, that is 
$f=g \quad \mu$-a.e. if there exists a null set $N$ such that $f(x) = g(x)$ for every $x \in N^c$.
\end{definition}
Often the following condition will be assumed:
\begin{as} \rm
 the ideal of $\mu$-zero sets is stable under countable unions, namely
 for every sequence $(A_k)_k$ such that $\mu(A_k) = 0$ for each $k \in \enne$, then also
$\mu(\bigcup _k A_k) = 0$ .
\end{as}

\noindent
If $\mu$ is also subadditive this assumption means that a countable union  of $\mu$-null sets  is again $\mu$-null.
If $\mu$ is continuous from below, then a countable union of null sets is also a null set, and this in turn implies the same property 
with $\mu$-zero sets. (\cite[Proposition 5.2 (4)]{MuSu}).
Recent results in such subject and for universal integrals are given in \cite{LIMP2}.\acr

\noindent
The following definition is recalled:
\begin{definition} \label{semiconvex} 
\rm (\cite{CM1978,CM1979})
$\mu$ is \it semiconvex \rm if
for every $E\in \mathcal{A} $ there exists $F\in \mathcal{A} $, $F\subset E$ such that $\displaystyle{\mu (F) =
\dfrac{1}{2}\mu (E)}$ and $\displaystyle{\mu (E \setminus F) = \dfrac{1}{2} \mu (E)}$;
\end{definition}

Again in \cite{CM1978}, an example is given of a submodular and semiconvex fuzzy measure which is not additive. 
This example also satisfies the condition that the $\mu$-zero sets form a $\sigma$-ideal. This example is reported and extended here, by the sake of completeness.

\begin{controes}\label{perachille}\rm
Let $X=[0,1]^2$, with the usual Borel $\sigma$-algebra $\mathcal{B}$. For each $B\in \mathcal{B}$, define
$$\mu(B):=\int_0^1 \gamma(B_y)dy,$$
where $B_y$ is the usual $y$-section of $B$ (and is measurable for almost all $y\in [0,1]$), $\gamma(A):=\sqrt{\lambda_1(A)}$ and 
$\lambda_1$ is the Lebesgue measure in $[0,1]$.
In order to prove that $\mu$ is a submodular, it is sufficient to show that $\gamma$ has this property. Indeed, let $A,A'$ be any two subsets of $[0,1]$. The following inequalities will be proved:
$$\gamma(A\cup A')+\gamma(A\cap A')\leq \gamma(A)+\gamma(A').$$
To this aim,  denote by $a, b,c$ respectively the quantities $\lambda_1(A)$, $\lambda_1(A')$, $\lambda_1(A\cap A')$, and observe that $c\leq a\wedge b$.
Then the previous inequality reduces to
$$\sqrt{a+b-c}+\sqrt{c}\leq \sqrt{a}+\sqrt{b},$$
i.e.
$$a+b-c+c+2\sqrt{c} \cdot\sqrt{a+b-c}\leq a+b+2\sqrt{ab}.$$
So, it is enough to prove  that $\sqrt{c} \cdot \sqrt{a+b-c}\leq \sqrt{ab},$ or, equivalently, $c(a+b-c)\leq ab$. Now, a simple analysis of the parabola $y=x(a+b-x)$, for $0\leq x\leq a\wedge b$, shows that its maximum value is achieved when $x=a\wedge b$: and in this case the maximum value is precisely $ab$. This concludes the proof of submodularity. 
\acr
Semiconvexity is an easy consequence of additivity of the Lebesgue measure: indeed, for every Borel subset $B\subset X$ the mapping 
$y\mapsto \int_0^y \gamma(B_t) dt$ is continuous and therefore achieves the value $\dfrac{1}{2}\mu(B)$ for a suitable element $t_0\in [0,1]$, hence $B':=B\cap ([0,1]\times [0,t_0])$ satisfies 
$\mu(B')=\dfrac{1}{2}\mu(B)$.
\acr
Finally, in order to prove that the set of $\mu$-zero sets form a $\sigma$-ideal it is enough to show  that $\mu(B)=0\Longleftrightarrow \lambda_2(B)=0$:
 indeed, if $\mu(B)=0$, then $\gamma(B_y)=0$ $\lambda_1$-a.e., and so $\lambda_1(B_y)=0$ $\lambda_1$-a.e.,
which in turn implies that $\lambda_2(B)=0$. Conversely, if $\lambda_2(B)=0$ then $\lambda_1(B_y)=0$ for almost all $y\in [0,1]$ and therefore $\gamma(B_y)=0$ $\lambda_1$-a.e., whence $\mu(B)=0$.
\end{controes}

In the same paper  the following properties of semiconvex submeasures have been established.
\begin{lemma}\label{CM2.1} 
\rm(\cite[ Lemma 2.1]{CM1978}) \em If $\mu :\mathcal{A} \rightarrow 
\mathbb{R}^0_+$
is a fuzzy,  finite
and semiconvex submeasure, then for every $A\in \mathcal{A}$ there exists a family of subsets of $A$,
$(A_t)_{t\in [0,1]}\subset \mathcal{A} $ such that
\begin{itemize}
\item[(\ref{CM2.1}.i)] $A_0 = \vuoto, \quad A_1 = A$;
\item[(\ref{CM2.1}.ii)] $\mu (A_t) = t\mu (A)$;
\item[(\ref{CM2.1}.iii)] for $t < t'$, there hold $A_t \subset A_{t'}$ and $\mu(A_{t'} \setminus A_t) = (t' -t) \mu(A)$.
\end{itemize}
In particular, for every  $A \in \mathcal{A}$, the range of $\mu$ restricted to $A$ is $[0,\mu(A)]$.

\end{lemma}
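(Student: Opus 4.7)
\emph{Proof plan.} I would build the family $(A_t)$ first for dyadic rationals $t=k/2^n$ by iterated application of semiconvexity, and then extend by an increasing-union limit to arbitrary $t\in[0,1]$. Set $A_0=\emptyset$, $A_1=A$ and proceed by induction on $n$. Given the family $(A_{k/2^n})_{k=0}^{2^n}$ satisfying (i)--(iii) restricted to the dyadic scale $1/2^n$, the inductive form of (iii) shows that each piece $B_{k,n}:=A_{(k+1)/2^n}\setminus A_{k/2^n}$ has measure $\mu(A)/2^n$; semiconvexity applied to $B_{k,n}$ then produces a set $F_{k,n}\subset B_{k,n}$ with $\mu(F_{k,n})=\mu(B_{k,n}\setminus F_{k,n})=\mu(A)/2^{n+1}$, and I declare $A_{2k/2^{n+1}}:=A_{k/2^n}$ and $A_{(2k+1)/2^{n+1}}:=A_{k/2^n}\cup F_{k,n}$.

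Verifying (ii) at the refined level proceeds via a pair of subadditivity estimates. The upper bound $\mu(A_{(2k+1)/2^{n+1}})\leq(2k+1)\mu(A)/2^{n+1}$ follows from the disjoint decomposition $A_{(2k+1)/2^{n+1}}=A_{k/2^n}\cup F_{k,n}$, while the matching lower bound comes from applying subadditivity to $A_{(k+1)/2^n}=A_{(2k+1)/2^{n+1}}\cup (B_{k,n}\setminus F_{k,n})$ and using the inductive value of $\mu(A_{(k+1)/2^n})$. Property (iii) at the refined level is then immediate because the consecutive differences are precisely $F_{k,n}$ or $B_{k,n}\setminus F_{k,n}$, each of measure $\mu(A)/2^{n+1}$. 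A small auxiliary induction shows that the dyadic sets are globally nested (since $A_{k/2^n}=A_{2k/2^{n+1}}$ by construction), so $A_s\subseteq A_{s'}$ whenever $s\leq s'$ are dyadic.

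The delicate step is the extension to arbitrary $t\in[0,1]$. Setting $k_n:=\lfloor t\,2^n\rfloor$, I would define $A_t:=\bigcup_n A_{k_n/2^n}$, an increasing countable union contained in $A$; monotonicity then gives the easy estimate $\mu(A_t)\geq \sup_n \mu(A_{k_n/2^n})=t\mu(A)$. For the matching upper bound I would rewrite
\[
A_t=A_{k_1/2}\cup\bigcup_{n\geq 1}\bigl(A_{k_{n+1}/2^{n+1}}\setminus A_{k_n/2^n}\bigr)
\]
as a disjoint countable union whose individual measures (each either $0$ or $\mu(A)/2^{n+1}$) telescope along the binary expansion of $t$ to $t\mu(A)$, and then invoke countable subadditivity of the submeasure to conclude $\mu(A_t)\leq t\mu(A)$. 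This limiting step is the main obstacle: semiconvexity gives direct control only on the dyadic scale, so passing to an arbitrary $t$ genuinely requires the $\sigma$-subadditivity carried by the submeasure hypothesis.

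Finally, for general $t<t'$ in $[0,1]$, the decomposition $A_{t'}=A_t\cup(A_{t'}\setminus A_t)$, combined with (ii), monotonicity and subadditivity, forces $\mu(A_{t'}\setminus A_t)=(t'-t)\mu(A)$, which is (iii). The concluding assertion that the range of $\mu$ on measurable subsets of $A$ equals $[0,\mu(A)]$ is then immediate from (ii) together with monotonicity.
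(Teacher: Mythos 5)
The paper does not actually prove this lemma --- it is imported verbatim from \cite[Lemma 2.1]{CM1978} --- so your argument can only be judged on its own terms. Your dyadic construction and the two finite-subadditivity estimates that pin down $\mu(A_{(2k+1)/2^{n+1}})$ from above and below are correct and are surely the intended mechanism. But there is one genuine gap, precisely at the step you yourself single out as delicate: you invoke ``countable subadditivity of the submeasure'' to get $\mu(A_t)\leq t\mu(A)$ for non-dyadic $t$. That property is not among the hypotheses. A submeasure, and in particular the notion of subadditivity the paper defines, is only \emph{finitely} subadditive ($\mu(A\cup B)\leq\mu(A)+\mu(B)$), and no continuity from below is assumed in this lemma; the paper treats $\sigma$-type conditions as separate, optional assumptions. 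So the telescoping disjoint-union argument for the upper bound, as written, uses a hypothesis you do not have, and your closing remark that the passage to arbitrary $t$ ``genuinely requires $\sigma$-subadditivity'' is not correct.

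The gap is repairable without any new hypothesis. Since the dyadic family is nested and $A_t:=\bigcup_m A_{\lfloor t2^m\rfloor/2^m}$, every term of that union is contained in $A_{\lceil t2^n\rceil/2^n}$ for each fixed $n$ (because $\lfloor t2^m\rfloor/2^m\leq t\leq \lceil t2^n\rceil/2^n$), hence $A_t\subseteq A_{\lceil t2^n\rceil/2^n}$ and monotonicity alone gives $\mu(A_t)\leq \lceil t2^n\rceil\,\mu(A)/2^n\to t\mu(A)$. The same sandwich is needed in your final paragraph: from $A_{t'}=A_t\cup(A_{t'}\setminus A_t)$, finite subadditivity yields only the lower bound $\mu(A_{t'}\setminus A_t)\geq (t'-t)\mu(A)$; the matching upper bound should come from $A_{t'}\setminus A_t\subseteq A_{\lceil t'2^n\rceil/2^n}\setminus A_{\lfloor t2^n\rfloor/2^n}$ together with the dyadic version of (iii) (which, for non-consecutive dyadics, you should state explicitly: it follows from finite subadditivity applied to the finite disjoint union of the consecutive blocks $B_{i,n}$). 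With these two corrections the proof is complete.
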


\begin{proposizione}\label{zero} 
For every $A,B \in \mathcal{A} $ with $A\subset B$, 
for every $t\in ]0,1[$,
and once $A_t\in \mathcal{A}$ has been chosen according to { Lemma {\rm \ref{CM2.1}} }
there exists $B_t\in \mathcal{A}$, $B_t\subset B$ with $B_t\cap A = A_t$ and $\mu (B_t) = t\mu(B)$.
\end{proposizione}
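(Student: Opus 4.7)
The plan is to build $B_t$ in the form $A_t \cup D$, with $D$ drawn from a chain inside $B \setminus A$ produced by Lemma \ref{CM2.1}, and to pin down the right $D$ via an intermediate value argument. First, apply Lemma \ref{CM2.1} to $B \setminus A \in \mathcal{A}$ to obtain a family $(D_s)_{s \in [0,1]}$ of measurable subsets of $B \setminus A$ with $D_0 = \vuoto$, $D_1 = B \setminus A$, $\mu(D_s) = s\,\mu(B \setminus A)$, and $\mu(D_{s'} \setminus D_s) = (s' - s)\,\mu(B \setminus A)$ whenever $s < s'$. Set $B_s := A_t \cup D_s$; since $D_s$ is disjoint from $A$, each $B_s$ automatically lies inside $B$ and satisfies $B_s \cap A = A_t$, so only the value $\mu(B_s)$ has to be tuned to $t\mu(B)$.

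Next, study $g(s) := \mu(B_s)$ on $[0,1]$. At $s=0$ one has $g(0) = \mu(A_t) = t\mu(A) \leq t\mu(B)$ by monotonicity. At $s = 1$ the set $B_1 = A_t \cup (B \setminus A)$ equals $B \setminus (A \setminus A_t)$; applying subadditivity to the partition $B = (B \setminus (A \setminus A_t)) \cup (A \setminus A_t)$ and using the chain identity $\mu(A \setminus A_t) = (1-t)\mu(A)$ from Lemma \ref{CM2.1}(iii), I obtain
\[
g(1) \;\geq\; \mu(B) - (1-t)\mu(A) \;\geq\; \mu(B) - (1-t)\mu(B) \;=\; t\mu(B),
\]
where the second inequality uses $\mu(A) \leq \mu(B)$. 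Hence $t\mu(B) \in [g(0), g(1)]$.

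Finally, from $B_{s'} = B_s \cup (D_{s'} \setminus D_s)$ for $s < s'$, subadditivity gives $g(s') \leq g(s) + (s' - s)\mu(B \setminus A)$ and monotonicity gives $g(s') \geq g(s)$, so $g$ is Lipschitz on $[0,1]$. The intermediate value theorem then furnishes $s^* \in [0,1]$ with $g(s^*) = t\mu(B)$, and $B_t := A_t \cup D_{s^*}$ is the required set. The only mildly delicate step is the lower bound $g(1) \geq t\mu(B)$, which rests on subadditivity (inherent in the submeasure hypothesis of Lemma \ref{CM2.1}) together with the chain identity $\mu(A \setminus A_t) = (1-t)\mu(A)$; beyond that the argument is routine.
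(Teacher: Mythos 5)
Your proof is correct and follows essentially the same route as the paper: both construct the candidate sets as $A_t\cup (B\setminus A)_s$ from the chain that Lemma \ref{CM2.1} provides inside $B\setminus A$, both establish the same two endpoint estimates (including the key inequality $\mu(B\setminus(A\setminus A_t))\geq \mu(B)-(1-t)\mu(A)\geq t\mu(B)$), and both conclude by an intermediate value argument. The only difference is presentational --- you make the continuity explicit via a Lipschitz bound on $s\mapsto\mu(A_t\cup D_s)$, where the paper instead invokes arcwise connectedness of the family with respect to the pseudometric $d_\mu$ and uniform continuity of $\mu$ --- so nothing further is needed.
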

\begin{proof} 
 By { Lemma \ref{CM2.1}} there exist two families $(A_s)_{s\in[0,1]}$, $((B\setminus A)_s)_{s\in[0,1]}$
with the properties (\ref{CM2.1}.i), (\ref{CM2.1}.ii) and (\ref{CM2.1}.iii).
Choose $A_t$ in the first family, and consider the family
\[ {\mathcal B} = \{ A_t\cup (B\setminus A)_s, s\in[0,1]\} .\]
Then, as in \cite[Theorem 2.3]{CM1978}, $\mathcal B$ is arcwise connected with respect to the pseudometric on $d_{\mu} :\mathcal{A} \times  \mathcal{A} \rightarrow \mathbb{R}$ defined by:
\(d_{\mu} (E,F) = \mu(E \Delta F).\)
Moreover, $\mu:({\mathcal A}, d_{\mu}) \rightarrow \erre$ is uniformly continuous since, for every pair $A_1, A_2 \in {\mathcal A}$, it is:
\begin{eqnarray*}
\mu(A_1) - \mu(A_2) \leq \mu(A_1 \cap A_2) + \mu(A_1 \setminus A_2) - \mu(A_2) \leq \mu (A_1 \Delta A_2)
\end{eqnarray*}
and analogously for $\mu(A_2) - \mu(A_1)$.
Therefore $\mu(\mathcal{B})$ is the interval
\( \mu (\mathcal{B}) = [ \mu(A_t), \mu(A_t \cup (B\setminus A))]. \)
It only remains to prove that $t\mu(B)$ lies in this interval.
From $\mu(A) \leq \mu (B)$ it follows that $\mu(A_t)=t\mu(A)\leq t\mu(B)$.
It will be shown now  that $t\mu (B) \leq \mu (A_t \cup (B\setminus A))$.
Observe that, by construction
\( A_t \cup (B\setminus A) = B \setminus (A \setminus A_{t}), \)
and so, since by (\ref{CM2.1}.iii), $\mu (A \setminus A_{t}) = (1-t) \mu (A)$,
\begin{eqnarray*}
\mu (A_t \cup (B\setminus A)) &=& \mu(B \setminus (A \setminus A_{t})) \geq \mu (B) - (1-t) \mu (A)  
=   \\ &=& t\mu (B) + (1-t)[\mu(B) - \mu (A)] \geq  t\mu(B).
\end{eqnarray*}
\noindent
Hence there exists $K\in {\mathcal B}$ such that $\mu(K) =t\mu(B).$
\par \noindent
Setting $B_t = K$ we have $B_t = A_t \cup (B\setminus A)_s$ for suitable $s$, and so $B_t\cap A = A_t$.
\end{proof}
On this subject other results have been obtained in \cite{CM1992,bcm}.


\section{The Choquet integral}\label{choquet}
Let $(X, \mathcal{A}, \mu)$ be a subadditive fuzzy measure space. A 
 function $f:X \rightarrow \erre^+_0$  is said to be {\it measurable} if $\{x | f(x) > \alpha\}$ is measurable for every $\alpha > 0$.
\begin{definition} \rm
The  \it Choquet integral \rm of a measurable function $f$ is defined by 
\[ \iO f d\mu:= \int_0^{\infty} \mu( f > t) dt,\]
where the integral on the right-hand side is an ordinary one.
The function  $f \in L^1_C(\mu)$ if and only if $f$ is measurable and  
$\iO f d\mu < \infty$.\acr
In the case of a measurable function $f$ with values in $\mathbb{R}$, according to \cite{D}
 the {\it asymmetric integral} \rm  
is defined as
\begin{eqnarray}
\iO f d\mu=
\int_0^{\infty} \mu( f > t) dt +
\int _{-\infty}^0 [\mu(f > t) - \mu(X)]dt.
\end{eqnarray}
\end{definition}
For   every $A \in \mathcal{A}$  the {\it conjugate} of $\mu$  is defined by: $\overline{\mu}(A) := \mu(X) - \mu(A^c)$.
\acr

\noindent
The Choquet integral fulfils the following properties:
\begin{proposizione}\label{De} \rm  (\cite[Proposition 5.1, Exercise 9.3 and Theorem 6.3]{D})
\begin{itemize}
\item[(\ref{De}.i)] $\iO 1_A d\mu= \mu(A)$;
\item[(\ref{De}.ii)] $\iO c f d\mu= c \iO f d\mu$ for $c \geq 0$;
\item[(\ref{De}.iii)] if  $0 \leq f \leq g$ $\mu$-a.e. then  $\iO f d\mu \leq \iO g d\mu$  (\cite[Exercise 9.3(c)]{D});  
\item[(\ref{De}.iv)] $\iO (f + c) d\mu= \iO f d\mu+ c\mu(X)$ for every $c \in \mathbb{R}$; 
\item[(\ref{De}.v)] if $\mu$ is a submodular and 
$\mu (f > -\infty) = \mu (g > -\infty) = \mu(X),$
 then
\[\iO (f+g) d\mu\leq \iO f d\mu+ \iO g d\mu; \hskip.5cm
\mbox{(\cite[Theorem 6.3]{D}) }
\] (observe that if $f,g$ are non negative the $\mu$-essential inf-boundedness is satisfied; moreover this assumption can be dropped if $\mu$ is continuous from below.)
Conversely, if \[\iO (f+g) d\mu\leq \iO f d\mu+ \iO g d\mu
\]
holds for all measurable non negative mappings $f,g$, then $\mu$ turns out to be  submodular: it suffices to replace $f$ and $g$ with the functions $1_A$ and $1_B$ for arbitrary measurable sets  $A,B$.
\item[(\ref{De}.vi)]  
\( \displaystyle{\iO -f d\mu= - \iO f d\overline{\mu}}.\)
\end{itemize}
\end{proposizione}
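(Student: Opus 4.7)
The plan is to verify the six items separately. Items (i)--(iv) and (vi) are essentially routine manipulations of the layer-cake formula defining the Choquet integral, while (v) is the only substantive claim and deserves the bulk of the attention.

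For (i), $\mu(\{1_A > t\}) = \mu(A)$ on $[0,1)$ and vanishes for $t \ge 1$, so the defining integral equals $\mu(A)$. Item (ii) follows from the change of variable $s = t/c$ inside $\int_0^{\infty}\mu(\{cf > t\})\,dt$. For (iii), the hypothesis $f \le g$ $\mu$-a.e.\ means $N := \{f > g\}$ is $\mu$-null, so $\{f > t\} \subset \{g > t\} \cup N$, and by Definition \ref{ae} we get $\mu(\{f > t\}) \le \mu(\{g > t\})$ for every $t$; integration preserves the inequality. Item (iv) comes from the translation identity $\mu(\{f+c > t\}) = \mu(\{f > t-c\})$ substituted into both halves of the asymmetric integral, producing the extra term $c\mu(X)$ after straightforward bookkeeping. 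For (vi), I would use the conjugate identity $\mu(\{-f > t\}) = \mu(X) - \overline{\mu}(\{f \ge -t\})$, substitute it into the definition of $\int_X -f\,d\mu$, and change variable $s = -t$; comparison with the layer-cake formula for $\overline{\mu}$ yields $-\int_X f\,d\overline{\mu}$.

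The main obstacle is (v), for which I would follow the classical strategy of Denneberg (Theorem 6.3). The pivotal observation is that for indicators $1_A, 1_B$ the identity $1_A + 1_B = 1_{A\cup B} + 1_{A\cap B}$, combined with the comonotonic additivity of the Choquet integral (a general property of monotone set functions, since two comonotonic functions have nested super-level sets), reduces the desired inequality to precisely the submodularity of $\mu$. Positive homogeneity (ii) extends this to non-negative scalar multiples, and an arbitrary pair of simple non-negative functions is treated by slicing them along their joint level sets into comonotonic strips and applying the submodular inequality strip-by-strip. Monotone approximation from below together with (iii) then extends the inequality to all non-negative measurable $f, g$, using continuity from below when available or truncation otherwise. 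The passage to the asymmetric case under $\mu(\{f > -\infty\}) = \mu(\{g > -\infty\}) = \mu(X)$ is then carried out by writing $f = f^+ - f^-$ and $g = g^+ - g^-$, handling the positive parts by what has just been proved and the negative parts via the conjugate identity (vi); the hypothesis guarantees the negative tails of the asymmetric integrals are well-defined and finite, so the subadditive estimate passes through. The converse assertion is immediate by specializing to indicator pairs, as the statement itself remarks.
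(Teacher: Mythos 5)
The paper offers no proof of Proposition \ref{De}: it is quoted directly from Denneberg with explicit pointers to \cite[Proposition 5.1, Exercise 9.3 and Theorem 6.3]{D}, so there is no in-paper argument to compare yours against. Judged on its own terms, your treatment of (\ref{De}.i)--(\ref{De}.iv) and (\ref{De}.vi) is the standard layer-cake bookkeeping and is fine; in particular, in (\ref{De}.iii) you correctly route the a.e.\ hypothesis through Definition \ref{ae} (so that $\mu(\{g>t\}\cup N)=\mu(\{g>t\})$), which is exactly what makes the fuzzy version work. The nonnegative case of (\ref{De}.v) via comonotonic additivity, reduction to indicators, and comonotonic slicing of simple functions is the classical route, and the converse via the pair $1_A,1_B$ is correct.

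The one genuine gap is your passage to the signed (asymmetric) case of (\ref{De}.v) by writing $f=f^+-f^-$, $g=g^+-g^-$ and ``handling the negative parts via (\ref{De}.vi)''. Comonotonic additivity does give $\mintX h\, d\mu=\mintX h^+\,d\mu-\mintX h^-\,d\overline{\mu}$, but the two resulting estimates do not combine. For the positive parts you can use $(f+g)^+\leq f^++g^+$ together with subadditivity; for the negative parts you would need $\mintX (f+g)^-\,d\overline{\mu}\geq \mintX f^-\,d\overline{\mu}+\mintX g^-\,d\overline{\mu}$, yet $(f+g)^-\leq f^-+g^-$ and $\overline{\mu}$ is supermodular (hence its Choquet integral is superadditive), so every inequality at your disposal points the wrong way and the chain does not close. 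The correct extension --- and the one in \cite[Theorem 6.3]{D} --- is by translation rather than by splitting into positive and negative parts: for $f,g$ essentially bounded below add a constant $M$ so that $f+M\geq 0$ and $g+M\geq 0$, apply the nonnegative case, and cancel the term $2M\mu(X)$ using (\ref{De}.iv); the general case then follows by truncating from below with $f\vee(-N)$ and $g\vee(-N)$ and letting $N\to\infty$, where the hypothesis $\mu(f>-\infty)=\mu(g>-\infty)=\mu(X)$ (or continuity from below) is precisely what guarantees convergence of the negative tail integrals.
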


\begin{definition}\label{indefinite} \rm
Given a non negative function $f\in L^1_{C}(\mu) $ let  $\mu _f $ be its indefinite Choquet integral, that is
\begin{eqnarray}\label{int-suE} 
\mu_f(E) = \iE f d\mu= \iO {\bf 1}_E f d\mu.
\end{eqnarray}
\end{definition}
It is clear then that $\mu_f$ is a fuzzy measure too.
Observe that for arbitrary $f$ the equality (\ref{int-suE}) fails to be true for the asymmetric integral: (see \cite[Chapter 11]{D}).

\begin{proposizione}\label{pmis} \rm
Let $f,g$ be two non negative functions in $ L^1_C (\mu)$; then:
\begin{itemize}
\item[\ref{pmis}.1)]  $\mu_f$ is a subadditive  fuzzy measure;
\item[\ref{pmis}.2)] if $\mu$ is also a submodular then $\mu_f$ is a submodular.
\item[\ref{pmis}.3)] if the family of $\mu$-zero sets is a $\sigma$-ideal then also $\mu_f$ has the same property.
\end{itemize}
\end{proposizione}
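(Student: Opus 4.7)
The plan is to reduce all three items to the corresponding properties of $\mu$ applied pointwise in the level-set parameter $t$, via the formula
\[
\mu_f(E) \;=\; \int_0^{+\infty} \mu(E \cap \{f > t\})\, dt,
\]
which is immediate from Definition~\ref{indefinite}, the nonnegativity of $f$, and the identity $\{\mathbf{1}_E f > t\} = E \cap \{f > t\}$ for $t > 0$. This representation is the workhorse throughout: each set-theoretic property of $\mu_f$ will come from integrating in $t$ the corresponding property of $\mu$ applied to the traces on the level sets $\{f>t\}$.

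For (\ref{pmis}.1) I would first verify the elementary set-function requirements: $\mu_f(\emptyset) = 0$ is trivial, $\mu_f(X) < +\infty$ because $f \in L^1_C(\mu)$, and monotonicity follows from Proposition~\ref{De}.iii (or directly from monotonicity of $\mu$ applied to $A\cap\{f>t\}\subset B\cap\{f>t\}$ and integration in $t$). Subadditivity then reduces to the pointwise inequality
\[
\mu((A \cup B) \cap \{f > t\}) \;\leq\; \mu(A \cap \{f > t\}) + \mu(B \cap \{f > t\}),
\]
which is just subadditivity of $\mu$ at each $t>0$; integrating in $t$ gives the claim. For (\ref{pmis}.2) the same strategy works, using in addition the identity $(A \cap B) \cap \{f > t\} = (A \cap \{f > t\}) \cap (B \cap \{f > t\})$, applying submodularity of $\mu$ at each fixed $t$, and integrating.

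For (\ref{pmis}.3), the key observation is that $t \mapsto \mu(E \cap \{f > t\})$ is non negative and non increasing, so the elementary estimate
\[
t_0 \cdot \mu(E \cap \{f > t_0\}) \;\leq\; \int_0^{t_0} \mu(E \cap \{f > t\})\, dt \;\leq\; \mu_f(E)
\]
upgrades $\mu_f(E) = 0$ to the much stronger statement $\mu(E \cap \{f > t\}) = 0$ for every $t>0$. From there, given $(A_k)_k$ with $\mu_f(A_k) = 0$, applying the $\sigma$-ideal hypothesis at each fixed $t$ to the family $(A_k \cap \{f > t\})_k$ yields $\mu\bigl((\bigcup_k A_k) \cap \{f > t\}\bigr) = 0$ for every $t>0$, and integrating in $t$ gives $\mu_f(\bigcup_k A_k) = 0$. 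I expect the only subtle step, and the one I would flag as the main (if modest) obstacle, to be precisely this upgrade from integral vanishing to pointwise-in-$t$ vanishing: everything else is a routine transfer of the properties of $\mu$ through the outer integration.
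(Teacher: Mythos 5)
Your proposal is correct and follows the same overall strategy as the paper: all three items are reduced to the corresponding property of $\mu$ on the level sets $E\cap\{f>t\}$ and then transferred to $\mu_f$ by integrating in $t$; parts (\ref{pmis}.1) and (\ref{pmis}.2) are essentially identical to the paper's argument. The one genuine difference is in part (\ref{pmis}.3): the paper only extracts from $\mu_f(B_n)=0$ that $\mu(B_n\cap\{f>t\})=0$ for $t$ outside a Lebesgue-null set $N_n$ (a nonnegative function with zero integral vanishes a.e.), and must then collect the exceptional sets into $N=\bigcup_n N_n$ before applying the $\sigma$-ideal hypothesis for $t\notin N$. You instead exploit the monotonicity of $t\mapsto\mu(E\cap\{f>t\})$ to get the estimate $t_0\,\mu(E\cap\{f>t_0\})\leq\mu_f(E)$, which upgrades the conclusion to $\mu(E\cap\{f>t\})=0$ for \emph{every} $t>0$ and eliminates the exceptional sets altogether. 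This is a modest but real streamlining; both arguments are valid, and yours is arguably the cleaner of the two since it needs nothing beyond monotonicity of $\mu$.
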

\begin{proof}
\begin{itemize}
\item[\ref{pmis}.1)] 
Clearly $\mu_f$ is obviously fuzzy. Moreover
\begin{eqnarray*}
\mu (\{x \in A \cup B : f(x) > t\}) \leq \mu (\{x \in A : f(x) > t\}) + \mu (\{x \in  B : f(x) > t\}) 
\end{eqnarray*}
and the assertion follows from both (\ref{int-suE}) and the additivity of the Lebesgue integral.
\item[\ref{pmis}.2)] 
 Analogously, for the submodular case we have:
\begin{eqnarray*}
\mu_f(A \cup B) &=& \int_{A \cup B}^* f d\mu = \int_0^{\infty} \mu (\{ x \in A \cup B : f(x) > t \}) dt \leq \\ &\leq&
\int_0^{\infty} \mu (\{ x \in A : f(x) > t \}) dt  + \int_0^{\infty} \mu (\{ x  \in  B : f(x) > t \}) dt  + \\ &-&
\int_0^{\infty} \mu (\{ x \in A \cap B : f(x) > t \}) dt;
\end{eqnarray*}
hence 
$\mu_f(A \cup B) + \mu_f(A \cap B) \leq \mu_f(A) + \mu_f(B).$
\item[\ref{pmis}.3)]
Now, assume that the $\mu$-zero sets form a $\sigma$-ideal, and let $(B_n)_n$ be any sequence of $\mu_f$-zero sets. Let  $B$ the union of all the sets $B_n$. Then, by properties of the Lebesgue integral, for each integer $n$ there exists a Borel subset $N_n$ of the halfline $[0,+\infty[$ 
with zero Lebesgue measure, such that $\mu(\{f>t\}\cap B_n)=0$ for all $t\notin N_n$. So, if $N=\bigcup_n N_n$, it is $\lambda(N)=0$ and 
$\mu(\{f>t\}\cap B_n)=0$  for all $t\notin N$ and every $n$. Since the ideal of $\mu$-zero sets is a $\sigma$-ideal then $\mu(\{f>t\}\cap B)=0$,
 for every $t\notin N$, and hence $\mu_f(B)=0$.
\end{itemize}
\end{proof}

Observe that ({\ref{pmis}.1}) continues to be true for the
 asymmetric integral of an arbitrary $f$ if $\mu$ is a submodular, while  ({\ref{pmis}.2}) fails to be true (\cite[Chapter 11]{D}).\acr

The indefinite Choquet integral  of non negative functions is absolutely continuous with respect to $\mu$ in a stronger form 
than \cite[Chapter 11]{D}: namely
\begin{proposizione}\label{ac}
Let $f: X \rightarrow \mathbb{R}_+$ be an integrable function. Then, for every $\varepsilon > 0$ there exists $\delta(\varepsilon) > 0$ such 
that for every $E \in \mathcal{A}$ with $\mu(E) < \delta(\varepsilon)$ then
\( \mu_f (E) < \varepsilon.\)
\end{proposizione}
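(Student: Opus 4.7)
The plan is to split the Choquet integral into a bounded part and a tail, exploiting only the monotonicity of $\mu$ together with the integrability of $f$. Writing
\[ \mu_f(E) = \int_0^{\infty} \mu(E\cap \{f > t\})\, dt, \]
I would choose a truncation level $M > 0$ and decompose this as the sum of $\int_0^M \mu(E\cap \{f>t\})\, dt$ and $\int_M^{\infty} \mu(E\cap \{f>t\})\, dt$.

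For the tail, monotonicity of $\mu$ gives $\mu(E\cap \{f>t\}) \leq \mu(\{f>t\})$, so the tail is bounded by $\int_M^{\infty} \mu(\{f>t\})\, dt$. Since $f \in L^1_C(\mu)$ means $\int_0^{\infty} \mu(\{f>t\})\, dt < \infty$, one can pick $M = M(\varepsilon)$ so that this tail is less than $\varepsilon/2$. For the bounded part, monotonicity again gives $\mu(E\cap \{f>t\}) \leq \mu(E)$, so
\[ \int_0^M \mu(E\cap \{f>t\})\, dt \leq M\, \mu(E). \]
Setting $\delta(\varepsilon) := \varepsilon/(2M)$, any $E$ with $\mu(E) < \delta(\varepsilon)$ satisfies $M\mu(E) < \varepsilon/2$, and the two halves combine to give $\mu_f(E) < \varepsilon$.

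I do not anticipate a real obstacle here: the argument is the familiar one for absolute continuity of the Lebesgue integral, and it transfers directly because the Choquet integral is built from an ordinary Lebesgue integral of the decreasing rearrangement-type function $t \mapsto \mu(\{f>t\})$, and monotonicity of $\mu$ (which is built into the definition of a fuzzy measure) is enough for both of the pointwise bounds used above. Neither subadditivity, submodularity, semiconvexity, nor the $\sigma$-ideal assumption on null sets plays any role, which is consistent with the statement, where only integrability of $f$ and the fuzzy structure of $\mu$ are invoked.
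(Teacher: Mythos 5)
Your proof is correct and is essentially identical to the one in the paper: the paper also splits the integral at a truncation level $a$, bounds the tail by $\varepsilon/2$ using integrability of $f$, bounds the lower part by $a\,\mu(E)$ via monotonicity, and chooses $\delta(\varepsilon)=\varepsilon/(2a)$. Nothing further is needed.
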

\begin{proof} 
Since the Choquet integral of non negative scalar functions is an improper integral then,  for every $\varepsilon > 0$, there exists $a \in \mathbb{R}_+$ such that
$\int_a^{\infty} \mu(x: f (x) > t) dt \leq \dfrac{\varepsilon}{2}$.
So, for every $E \in \mathcal{A}$, by (\ref{int-suE}) it is
\begin{eqnarray*}
\mu_f (E) = \int_E^* f d\mu&=& \int_0^a \mu (x \in E : f (x) > t ) dt +
\int_a^{\infty} \mu (x \in E : f(x) > t) dt \leq \\&\leq&
 a \mu(E) + \frac{\varepsilon}{2}.
\end{eqnarray*}
So it is enough to choose $\delta(\varepsilon) = \varepsilon/2a$. 
\end{proof}
On this subject see also \cite{cdlds}. 
The next proposition permits to obtain the same results as in  \cite[Proposition 11.1]{D} but with different hypotheses:
 in the quoted proposition $\mu$ is asked to be monotone and continuos from below, here   the first 
condition will be strengthened
while the second will be weakened.
\begin{proposizione}\label{muf} 
If $\mu$ is a subadditive fuzzy measure whose zero sets form a $\sigma$-ideal
\it and if $f,g$ are two non-negative measurable functions
with $\mu_f \leq \mu_g$, then $f \leq g~ \mu$-a.e.
\end{proposizione}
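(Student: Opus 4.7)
The plan is to argue by contradiction, assuming that $\{f > g\}$ fails to be a $\mu$-null set, and then to exhibit a subset $E \subset \{f>g\}$ of positive $\mu$-measure on which the Choquet integrals of $f$ and $g$ give contradictory inequalities. Since $\mu$ is subadditive, every set of $\mu$-measure zero is a $\mu$-null set by \cite[Proposition 5.2]{MuSu}, so it suffices to show that $\mu(\{f>g\})=0$.

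First I would write
\[
\{f>g\} \;=\; \bigcup_{q\in\mathbb{Q}\cap(0,\infty)} \bigl(\{f>q\}\cap\{g<q\}\bigr),
\]
which is valid by the density of the rationals. If every set on the right had $\mu$-measure zero, the $\sigma$-ideal hypothesis would force $\mu(\{f>g\})=0$. So some rational $q>0$ yields $\mu(\{f>q\}\cap\{g<q\})>0$. Decomposing further via $\{g<q\}=\bigcup_n\{g\le q-1/n\}$ and invoking the $\sigma$-ideal property once more, I can fix an integer $n$ so that the set
\[
E\;:=\;\{f>q\}\cap\{g\le q-1/n\}
\]
satisfies $\mu(E)>0$.

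Now I would estimate $\mu_f(E)$ and $\mu_g(E)$ through the layer-cake formula (\ref{int-suE}). For $0<t<q$ one has $E\subseteq\{f>q\}\subseteq\{f>t\}$, hence $\mu(\{f>t\}\cap E)=\mu(E)$, which gives
\[
\mu_f(E)\;\ge\;\int_0^{q}\mu(\{f>t\}\cap E)\,dt\;=\;q\,\mu(E).
\]
On the other hand, for $t\ge q-1/n$ the set $\{g>t\}\cap E$ is empty, since $E\subseteq\{g\le q-1/n\}$; so by monotonicity of $\mu$,
\[
\mu_g(E)\;=\;\int_0^{q-1/n}\mu(\{g>t\}\cap E)\,dt\;\le\;\bigl(q-\tfrac{1}{n}\bigr)\mu(E).
\]
Combining these with the hypothesis $\mu_f(E)\le\mu_g(E)$ yields $q\,\mu(E)\le (q-1/n)\mu(E)$, forcing $\mu(E)=0$, a contradiction.

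The main technical point is the reduction to a set $E$ on which $f$ is uniformly above a threshold and $g$ is uniformly below a strictly smaller threshold; this is where the $\sigma$-ideal hypothesis on $\mu$-zero sets is essential, because without it one cannot pass from "every slice has measure zero" to "the whole union has measure zero." Everything else is a direct unwinding of the definition of $\mu_f,\mu_g$ in Definition~\ref{indefinite} together with the monotonicity of $\mu$.
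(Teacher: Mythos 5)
Your proof is correct. At the top level it follows the same strategy as the paper --- argue by contradiction and use the $\sigma$-ideal hypothesis to extract a positive-measure set on which $f$ exceeds $g$ by a uniform margin, then derive $\mu_f>\mu_g$ there --- but the extraction and the final estimate are genuinely different. The paper uses the single decomposition $\{f>g\}=\bigcup_k\{f>g+1/k\}$, picks $A_m$ with $\mu(A_m)>0$, and concludes via $\mu_f(A_m)\geq \int_{A_m}^{*}(g+\tfrac{1}{m})\,d\mu=\mu_g(A_m)+\tfrac{1}{m}\mu(A_m)>\mu_g(A_m)$, which leans on the monotonicity property (\ref{De}.iii) and the translation property (\ref{De}.iv) of the Choquet integral (applied, implicitly, to the integral restricted to $A_m$). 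You instead decompose twice --- over positive rationals $q$ and then over $n$ --- so as to land on a set $E$ where $f$ is uniformly above the constant $q$ and $g$ uniformly below the constant $q-1/n$; after that, the comparison $q\mu(E)\leq \mu_f(E)\leq\mu_g(E)\leq (q-1/n)\mu(E)$ needs only the monotonicity of $\mu$ and the layer-cake definition, with no appeal to the algebraic properties of the Choquet integral. Your route is slightly longer but more elementary and self-contained; the paper's is shorter but quietly uses (\ref{De}.iv) in its restricted form. You are also more explicit than the paper about the final step, namely passing from $\mu(\{f>g\})=0$ to ``$\{f>g\}$ is a null set'' via subadditivity, which is what the a.e.\ conclusion in the sense of Definition \ref{ae} actually requires.
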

\begin{proof}
Assume by contradiction $\mu (f > g)$ is positive. Since
$A = \{ f > g\} = \cup_k \{ f > g + 1/k\}= \cup_k A_k$,  
then there should exist $m$ for which
$\mu (A_m) > 0$ (otherwise, by the $\sigma$-ideal condition,
 $\mu (A)$ should be zero too).
Then
\[\mu_f (A_m) \geq \iAm (g + \dfrac{1}{m}) d\mu=
\mu_g(A_m) + \dfrac{1}{m} \mu(A_m) > \mu_g(A_m).\]
 \end{proof}

In  measure theory it is well-known that if $f,g$ are measurable then $f=g$ $\mu$-a.e. (in the classical sense), if and only if their integrals are equal. This is not valid  for the Choquet integral unless we assume
that the  ideal of $\mu$-zero sets is stable under countable unions:
\begin{corollario}\label{ugualeqo}
If $\mu$ is a subadditive fuzzy measure whose zero sets form a $\sigma$-ideal,
$f,g$ are two non negative and measurable functions
and $\mu_f (E) = \mu_g (E)$
for every $E \in {\mathcal A}$,  then 
 $f=g$ in $X$ $\mu$-a.e.
\end{corollario}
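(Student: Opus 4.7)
The plan is to reduce the equality of the indefinite Choquet integrals to two inequalities and apply Proposition \ref{muf} in each direction. Since the hypothesis $\mu_f(E) = \mu_g(E)$ is asserted for every $E \in \mathcal{A}$, it is equivalent to the conjunction of $\mu_f \leq \mu_g$ and $\mu_g \leq \mu_f$ as set functions, and under the standing hypotheses of the corollary (subadditivity of $\mu$, $\sigma$-ideal property of its zero sets, and non-negativity of $f,g$) both forms of Proposition \ref{muf} are available.

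First I would apply Proposition \ref{muf} to the pair $(f,g)$: this yields a null set $N_1$ such that $f(x) \leq g(x)$ for every $x \in N_1^c$. Then I would apply the same proposition with the roles of $f$ and $g$ swapped, obtaining a null set $N_2$ with $g(x) \leq f(x)$ on $N_2^c$. Combining, on $(N_1 \cup N_2)^c = N_1^c \cap N_2^c$ both inequalities hold, so $f$ and $g$ coincide there.

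The final step is to ensure that $N_1 \cup N_2$ is itself a null set, so that the pointwise coincidence of $f$ and $g$ on its complement qualifies as a genuine $\mu$-almost everywhere equality in the sense fixed by the paper. This is precisely the point where the $\sigma$-ideal assumption intervenes: as remarked right after the Assumption in the preliminaries, when $\mu$ is subadditive and its zero sets form a $\sigma$-ideal, the family of $\mu$-null sets in the sense of Definition \ref{ae} is stable under countable, hence finite, unions. The only thing demanding mild care is not to conflate the two a priori distinct notions of $\mu$-null set and set of $\mu$-measure zero; otherwise the statement is an almost immediate corollary of Proposition \ref{muf}, and I do not anticipate any serious obstacle.
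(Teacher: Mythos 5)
Your proposal is correct and follows essentially the same route as the paper, which likewise deduces the result by applying Proposition \ref{muf} in both directions; you merely spell out the bookkeeping of the two exceptional null sets, whose union is null already because the $\mu$-null sets form an ideal (as noted in Definition \ref{ae}), without needing the full $\sigma$-ideal hypothesis for that final step.
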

\begin{proof}
Since  $\mu_f (E) \leq \mu_g(E)$  and $\mu_f(E) \geq \mu_g(E)$ for every $E \in {\mathcal A}$ the assertion
follows from Proposition \ref{muf}.
\end{proof}

For simple functions, without subadditivity assumption on $\mu$, one has:
\begin{proposizione}\mbox{\rm \cite[ Chap. 5, pag. 63]{D}}\label{int-simple}
Let $f: X \rightarrow \mathbb{R}^0_+$ be defined by
$f (x) = \sum_{i=1}^r x_i 1_{A_i} (x),$ 
with $A_i \cap A_j = \vuoto$ if $i \neq j$. If ~
$x_i > x_{i+1},~ i=1,\ldots, r-1$, 
then
\begin{eqnarray}\label{sommemon}
\int_{X}^* f d\mu&=&
\sum_{i=1}^{r} (x_i - x_{i+1}) \mu(S_i)
\end{eqnarray}
where $S_i = \cup_{j \leq i} A_j$ and $x_{r+1} =0$.
\end{proposizione}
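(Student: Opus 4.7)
The plan is to apply the definition of the Choquet integral as the ordinary improper Lebesgue integral
\[
\iO f d\mu = \int_0^{\infty} \mu(\{f > t\})\,dt,
\]
and simply identify the super-level sets $\{f > t\}$ as step functions of $t$, then compute the resulting one-dimensional integral.

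The first step is to describe $\{f > t\}$ explicitly. Since the $A_i$ are pairwise disjoint and $f$ equals $x_i$ on $A_i$ and is zero on the complement of $\bigcup_i A_i$, one has $\{f > t\} = \bigcup\{A_j : x_j > t\}$. Using the strict monotonicity $x_1 > x_2 > \cdots > x_r > x_{r+1} = 0$, the condition $x_j > t$ is satisfied precisely for those indices $j$ with $j \leq i$ whenever $t \in [x_{i+1}, x_i)$; hence $\{f > t\} = S_i$ on that interval. For $t \geq x_1$ the set $\{f > t\}$ is empty, so $\mu(\{f>t\}) = 0$.

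The second step is to partition $[0,\infty) = \bigsqcup_{i=1}^r [x_{i+1}, x_i) \,\cup\, [x_1,\infty)$, and integrate. On each $[x_{i+1}, x_i)$ the integrand $t \mapsto \mu(\{f>t\})$ is constantly equal to $\mu(S_i)$, giving a contribution $(x_i - x_{i+1})\mu(S_i)$; the tail $[x_1,\infty)$ contributes nothing. Summing over $i$ yields the formula (\ref{sommemon}).

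There is really no obstacle here: the argument is pure bookkeeping based on the definition, and it requires no subadditivity or submodularity of $\mu$ (only monotonicity, to guarantee that the super-level sets are measurable via the measurability of $f$, which in this case is automatic because $A_1,\ldots,A_r \in \mathcal{A}$). The only point worth flagging is the convention $x_{r+1}=0$, which ensures that the last interval $[0, x_r)$ is handled uniformly with the others and that $S_r = \bigcup_{j\leq r} A_j$ appears with the correct weight $x_r$.
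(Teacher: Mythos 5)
Your proof is correct. The paper does not actually prove this proposition --- it is stated with a citation to Denneberg's book --- and your direct computation (identifying $\{f>t\}$ as $S_i$ on each interval $[x_{i+1},x_i)$ and integrating the resulting step function) is exactly the standard argument behind the cited result; it is complete, uses only monotonicity of $\mu$ for nothing beyond well-definedness, and correctly handles the tail $[x_1,\infty)$ and the convention $x_{r+1}=0$ (note that if $x_r=0$ the last interval degenerates and its term vanishes, so the formula still holds).
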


As a consequence of Proposition \ref{int-simple},  a Jensen-type inequality for the Choquet integral will be stated.
 This result is  still valid without subadditivity assumptions.
This inequality was already given by \cite{LIMP,PT, WANG} under suitable assumptions, however  here a more direct proof will be stated, relying essentially upon Proposition \ref{int-simple}. For further use,  only the formulation for {\em concave} functions will be given.

\begin{theorem}\label{Jensen} {\rm (Jensen inequality)}
Let $f:X\to \erre_+^0$ be any integrable function, and assume that $u:\erre_+^0\to \erre_+^0$ is any 
concave continuous monotone mapping such that $u(f)$ is integrable too. Then it is
$$u(\mu(X)^{-1}\mu_f (X))\geq \mu(X)^{-1}\mu_{ u(f) } (X).$$
\end{theorem}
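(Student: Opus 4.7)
The plan is to first establish the inequality for non-negative simple functions by reducing both sides to weighted averages with respect to a common discrete probability distribution, and then extend to arbitrary integrable $f$ by monotone approximation. Throughout I will treat the case in which $u$ is non-decreasing (the non-increasing case being symmetric up to reordering of the levels of $u(f)$).

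\emph{Simple case.} Write $f=\sum_{i=1}^r x_i\mathbf{1}_{A_i}$ with disjoint $A_i\in\mathcal{A}$, values $x_1>x_2>\cdots>x_r>0$, and set $A_0:=X\setminus\bigcup_{i=1}^r A_i$, $x_0:=0$, $S_i:=\bigcup_{j\leq i}A_j$, $S_0:=\emptyset$. Proposition \ref{int-simple} combined with a telescoping (summation-by-parts) rearrangement yields
\[
\frac{\mu_f(X)}{\mu(X)}=\sum_{i=0}^r q_i x_i,\qquad q_i:=\frac{\mu(S_i)-\mu(S_{i-1})}{\mu(X)}\ \ (i\geq 1),\quad q_0:=1-\frac{\mu(S_r)}{\mu(X)}.
\]
Monotonicity of $\mu$ forces $q_i\geq 0$, and by construction $\sum_i q_i=1$, so $(q_i)_{i=0}^r$ is a genuine probability distribution. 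Since $u$ is non-decreasing, the function $u(f)=\sum_{i=1}^r u(x_i)\mathbf{1}_{A_i}+u(0)\mathbf{1}_{A_0}$ already has its values listed in the correct decreasing order on the same sets $A_i$. The analogous computation via Proposition \ref{int-simple} gives
\[
\frac{\mu_{u(f)}(X)}{\mu(X)}=\sum_{i=0}^r q_i\, u(x_i)
\]
with \emph{exactly the same weights} $q_i$. The ordinary discrete Jensen inequality for the concave $u$ then delivers
\[
u\!\left(\sum_{i=0}^r q_i x_i\right)\geq \sum_{i=0}^r q_i u(x_i),
\]
which is the desired inequality in the simple-function case.

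\emph{Extension to integrable $f$.} Approximate $f\geq 0$ from below by simple functions $f_n\uparrow f$ (for instance the standard dyadic truncation $f_n=2^{-n}\lfloor 2^n(f\wedge n)\rfloor$). By continuity of $u$, also $u(f_n)\uparrow u(f)$. Under the standing framework of the section (and, in particular, when $\mu$ is continuous from below, which is the usual sufficient condition for the monotone convergence of the Choquet integral), $\mu(\{f_n>t\})\uparrow \mu(\{f>t\})$ for every $t$; Lebesgue monotone convergence in the outer variable $t$ then gives $\mu_{f_n}(X)\uparrow\mu_f(X)$ and, likewise, $\mu_{u(f_n)}(X)\uparrow\mu_{u(f)}(X)$. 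Continuity of $u$ propagates the simple-function inequality to the limit.

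\emph{Main obstacle.} The essentially algebraic part — recognising the common probability weights $q_i$ and invoking discrete Jensen — is painless. The delicate point is the passage to the limit: the monotone-convergence property for the Choquet integral is not explicitly listed in the hypotheses of the theorem, so one has to either appeal to continuity from below of $\mu$, replace monotone convergence by a dominated-type argument based on the integrability of $u(f)$ (bounding $\mu(\{u(f_n)>t\})\leq \mu(\{u(f)>t\})$ and applying Lebesgue dominated convergence in $t$), or confine the statement to those $\mu$ satisfying the $\sigma$-ideal assumption used earlier in the paper. This is the only technical hurdle; once some such regularity is available, the proof reduces cleanly to the simple-function calculation above.
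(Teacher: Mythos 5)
Your simple-function step is exactly the paper's: both identify the common weights $\mu(S_i)-\mu(S_{i-1})$ as a probability distribution (using monotonicity of $\mu$ and $u$) and apply discrete Jensen. The gap is in the extension to general $f$. You approximate $f$ \emph{monotonically} from below by $f_n\uparrow f$ and claim $\mu(\{f_n>t\})\uparrow\mu(\{f>t\})$; that step is precisely continuity from below of $\mu$, which is \emph{not} among the hypotheses of the theorem (the standing assumption of the section is only that $\mu$ is a subadditive fuzzy measure). Without it, one only gets $\lim_n\mu(\{f_n>t\})\le\mu(\{f>t\})$, hence $\lim_n\mu_{u(f_n)}(X)\le\mu_{u(f)}(X)$ with possibly strict inequality, and the chain $u(\mu_f(X))\ge\lim_n\mu_{u(f_n)}(X)$ no longer dominates $\mu_{u(f)}(X)$. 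Your proposed fallback of ``dominated convergence in $t$'' does not repair this, because dominated convergence still requires pointwise convergence of $t\mapsto\mu(\{u(f_n)>t\})$ to $t\mapsto\mu(\{u(f)>t\})$ — the very thing that fails without continuity from below. So as written the proof only covers measures continuous from below, a strictly smaller class than the theorem addresses.

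The paper's route avoids set-continuity of $\mu$ altogether by splitting the limit passage in two. For bounded $f$ it uses \emph{uniform} approximation by simple functions (the Lebesgue ladder): if $\|f-f_n\|_\infty\le\varepsilon$ then properties (\ref{De}.iii) and (\ref{De}.iv) give $|\mu_f(X)-\mu_{f_n}(X)|\le\varepsilon\mu(X)$ directly, and local uniform continuity of $u$ does the same for $u(f_n)$; no continuity of $\mu$ on monotone sequences of sets is invoked. For unbounded $f$ it truncates, $f_N:=f\wedge N$, and here $\{f_N>t\}=\{f>t\}$ \emph{exactly} for $t<N$, so
$$\mu_{f_N}(X)=\int_0^N\mu(\{f>t\})\,dt\longrightarrow\int_0^\infty\mu(\{f>t\})\,dt=\mu_f(X)$$
by monotone convergence of the ordinary integral in the variable $t$ only, with the analogous identity $u(f_N)=u(f)\wedge u(N)$ on the other side. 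If you replace your monotone approximation by this two-stage (uniform, then truncation) scheme, your argument closes under the stated hypotheses.
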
 
\begin{proof} 
Without loss of generality assume that $\mu(X)=1$.
As a first step, suppose that $f$ is simple, i.e. 
$f (x) = \sum_{i=1}^k x_i 1_{A_i} (x),$
assuming $x_i>x_{i+1}$ for all $i=1,...,k-1$.\acr
Then
$u(f(x))= \sum_{i=1}^k u(x_i) 1_{A_i} (x);$
since $u$ is monotone it is $u(x_i)\geq u(x_{i+1})$ for all $i$.
Then, using (\ref{sommemon}), one can write
\begin{eqnarray*}
 \mu_f(X) &=& x_k\mu(X)+(x_{k-1}-x_k)\mu(S_{k-1})+\ldots+(x_3-x_2)\mu(A_1\cup A_2)+
\\ &+&(x_2-x_1)\mu(A_1)+ x_1\mu(A_1)=\\
&=& x_k\big(\mu(X)-\mu(S_{k-1})\big)+x_{k-1}\mu(S_{k-1})+\ldots+(x_3-x_2)\mu(A_1\cup A_2)+
\\ &+& (x_2-x_1)\mu(A_1)+ x_1\mu(A_1)=\\
&=& x_k\big(\mu(X)-\mu(S_{k-1})\big)+x_{k-1}\big(\mu(S_{k-1})-\mu(S_{k-2})+
x_{k-2}\mu(S_{k-2})+\ldots \\ &+& (x_2-x_1)\mu(A_1)+ x_1\mu(A_1)=\\
&=& x_k\big(\mu(X)-\mu(S_{k-1})\big)+x_{k-1}\big(\mu(S_{k-1})-
\mu(S_{k-2})\big)+ \ldots+ \\ &+& x_2\big(\mu(A_1\cup A_2)-\mu(A_1)\big)+ x_1\mu(A_1).
\end{eqnarray*}
This formula shows that the integral of $f$ is a convex combination of the elements $x_k,x_{k-1},...,x_1$, 
obtained with the positive coefficients 
$$\mu(X)-\mu(S_{k-1}), \mu(S_{k-1})-\mu(S_{k-2}),\ldots,
\mu(A_1\cup A_2)-\mu(A_1), \mu(A_1),$$ whose sum is $\mu(X)=1$.
And clearly the same coefficients appear in the formula giving  $\mu_{u(f)} (X)$.
So, by concavity of $u$ it follows easily that
$u(\mu_f (X)) \geq \mu_{u(f)} (X)$.
and this proves the theorem for the case $f$ simple. \acr
If $f$ is not simple, but bounded, then $f$ can be uniformly approximated  with a sequence of simple 
functions $f_n$ (the usual {\em Lebesgue ladder} does the job). Then $u(f)$ is uniformly approximated 
by the sequence $u(f_n)$, since $u$ is locally uniformly continuous.
Then 
$$u(\mintX f d\mu)=\lim_n u(\mintX f_n d\mu)\geq  
\lim_n\mintX u(f_n) d\mu=\mintX u(f)d\mu.$$
Finally, suppose that $f$ is unbounded. In this case, for each integer $N$ define $f_N:=f\wedge N$, and observe that
$$\mintX fd\mu=\lim_N \mint f_Nd\mu.$$
Then 
$$u(\mintX f_N d\mu)\geq \mintX u(f_N)d\mu=\mintX u(f)\wedge u(N)d\mu.$$
The conclusion now follows observing that 
$$\lim_Nu(\mintX f_N d\mu)=u(\mintX f d\mu) \quad   \text{ and  that}\quad  \lim_N\mintX u(f)\wedge u(N)d\mu=\mintX u(f)d\mu.$$ 
\end{proof}

The Jensen inequality given here holds (in the opposite sense) also when concavity is replaced by
 convexity; its  proof is perfectly similar to this.

\subsection{The vector Choquet integral}
In $\erre ^n$ let  $\mathbb{R}_+^n$ be the positive orthant, and  $(\mathbb{R}_+^n)^{\circ}$ be its interior.
Also let $\leq $ be the usual order between numbers, and  \underline{$\ll $} be  the usual
partial order between vectors in $\erre ^n$, namely $ x \underline{\ll} y$  means that $x_i \leq y_i$ for every $i=1,2, \cdots, n$, while
$x \gg y$ means that $x_i > y_i$ for every $i$.

\begin{definition}\rm 
Given a vector measurable function $f = (f_1, \ldots f_n): X \rightarrow \mathbb{R}_+^n$   the monotone integral is considered componentwise,
 and the  notation $\mint f d\mu$ is used for the vector 
$$\mint fd\mu= \left( \mint f_1d\mu, \ldots ,\mint f_nd\mu\right).$$
Then $f\in L^1_{C}(\mu, \mathbb{R}_+^n)$ if each of its components is in $L^1_{C}(\mu)$.
\end{definition}

\begin{as}\rm
Suppose now that  $\mu$  is a fuzzy measure which satisfies 
the following condition:
      \begin{itemize}
          \item[(H.0)] there exists a partition 
$\{E_i, i=1 , \ldots, r \}$   of $X$, such that for every $A\in \mathcal{A}$
                   $$ \mu (A) = \sum_{i=1}^r \mu(A \cap E_i).$$ 
      \end{itemize}
 Assumption ({\bf H.0})  means that
$\mu$ can be cut in a  finitely additive way on $\{E_i, i=1 , \ldots, r \}$, namely
$ \mu (A \cap \cup_{j \leq k} E_j ) = \sum_{j=1}^k \mu(A \cap E_j), \text{ for every } A \in \mathcal{A}, ~
k \leq r;$
in fact
\[ \mu(A \cap (\cup_{j \leq k} E_j) ) = \sum_{i=1}^r \mu(A \cap (\cup_{j \leq k} E_j) \cap E_i)
= \sum_{j=1}^k \mu(A \cap E_j).
\]
\end{as}

\begin{rem}\rm 
Trivially, the measure $\mu$ defined in Example \ref{perachille} satisfies assumption $\bf{(H.0})$ with $n=1$, 
however it can be easily modified in
order to admit an arbitrary finite number of subsets of the type $E_i$: it will suffice to  paste together the squares 
$E_i:=[i,i+1[\times [0,1]$, each with a  copy of the measure $\mu$, and defining additively the measure of all 
sets that are unions of measurable subsets of  the $E_i's$.
\end{rem}

From now on  ({\bf H.0}) will be assumed and  denote by $\{E_i, i \leq r \}$ the finite decomposition of $X$ involved in it.
\begin{proposizione}\label{intadditivo}
If $g :X \rightarrow \mathbb{R}^n_+$ is in $L^1_C(\mu,\mathbb{R}^n_+)$, then, for every $A \in \mathcal{A}$,   
\begin{itemize}
\item[\ref{intadditivo}.1)]
$\mu_g(A) = \sommar \mu_g(A \cap E_i)$.
\item[\ref{intadditivo}.2)]
In particular, if  $g$ is of the form
\( g = \sommar c_i 1_{E_i}\)
then, for every $A \in \mathcal{A}$,  its Choquet integral is given by
$\mu_g(A) = \sommar c_i \mu (A \cap E_i).$
\item[\ref{intadditivo}.3] If $\mu$ is  submodular and $f :X \rightarrow \mathbb{R}^n_+$ is in $L^1_C(\mu,\mathbb{R}^n_+)$ then,  
 for every $A, B \in \mathcal{A}$ it is
\[ \mintX (f \uno_A + g \uno_B) d \mu \quad  \underline{\ll } \mintA  f d\mu + \mintB   g d\mu.\]
\end{itemize}
\end{proposizione}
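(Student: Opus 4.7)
The plan is to prove the three items in sequence, reducing throughout to the scalar case since both the Choquet integral on $\mathbb{R}^n_+$-valued maps and the partial order \underline{$\ll$} are defined coordinate by coordinate. So I fix an arbitrary component and treat $f,g$ as non-negative scalar functions.

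For (\ref{intadditivo}.1) I would start from the layer-cake definition
$\mu_g(A)=\int_0^{\infty}\mu(\{x\in A : g(x)>t\})\,dt$. For every fixed $t>0$ the superlevel set $\{x\in A : g(x)>t\}$ is measurable, so hypothesis $(\mathbf{H.0})$ applies and yields
$\mu(\{x\in A : g(x)>t\}) = \sum_{i=1}^r \mu(\{x\in A\cap E_i : g(x)>t\})$.
Because the sum is finite it commutes with the Lebesgue integration in $t$, giving $\mu_g(A)=\sum_{i=1}^r \mu_g(A\cap E_i)$. Applying this componentwise yields the vector equality.

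Part (\ref{intadditivo}.2) then follows at once from (\ref{intadditivo}.1): on $A\cap E_i$ the function $g=\sum_{j} c_j \uno_{E_j}$ is identically equal to the constant vector $c_i$, so combining Proposition \ref{De}.i with Proposition \ref{De}.ii coordinate-wise gives $\int_{A\cap E_i}^{*} g\,d\mu = c_i\,\mu(A\cap E_i)$, and summing over $i$ closes the argument.

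For (\ref{intadditivo}.3), reducing again to a scalar coordinate, the two maps $f\uno_A$ and $g\uno_B$ are non-negative and measurable, so the essential inf-boundedness assumption of Proposition \ref{De}.v is automatically satisfied. That proposition, applied to the submodular $\mu$, produces $\int_X^{*}(f\uno_A+g\uno_B)\,d\mu \leq \int_X^{*} f\uno_A\,d\mu + \int_X^{*} g\uno_B\,d\mu$, and by Definition \ref{indefinite} the right-hand side is exactly $\int_A^{*} f\,d\mu + \int_B^{*} g\,d\mu$. This is the asserted componentwise inequality. No genuine obstacle is expected: the only bookkeeping steps requiring care are the exchange of a finite sum with the Lebesgue integration in (\ref{intadditivo}.1) and the verification that Proposition \ref{De}.v can be invoked directly in (\ref{intadditivo}.3).
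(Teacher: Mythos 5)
Your proposal is correct and follows essentially the same route as the paper: part 1 by applying $(\mathbf{H.0})$ to the superlevel sets in the layer-cake formula and exchanging the finite sum with the Lebesgue integral, part 2 by the constancy of $g$ on each $E_i$, and part 3 by invoking Proposition \ref{De}.v together with equation (\ref{int-suE}). The paper merely states parts 2 and 3 without the details you supply, so nothing is missing.
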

\begin{proof}
Fix $A\in \mathcal{A}$: then
\begin{eqnarray*}
\mu_g(A) &=& \mintA  g d\mu = \int_0^{\infty}\mu(\{x\in A:g(x)>t\} )dt=
\\ &=&
 \int_0^{\infty}\sommar\mu(\{x\in A\cap E_i:g(x)>t\})dt= \\
&=& \sommar \int_0^{\infty}\mu(\{x\in A\cap E_i:g(x)>t\})dt=  \\ &=&
\sommar \mintAEi gd\mu = \sommar \mu_g(A\cap E_i).
\end{eqnarray*}
The second part of the assertion is trivial.
The last assertion follows directly from ({\ref{De}.v}) and equation (\ref{int-suE}).
\end{proof}

 So, assumption ({\bf H.0}) makes the integral of simple functions  on $(E_i)_{i \leq r}$ additive. Moreover
\begin{proposizione}\label{corollario}
If $g: X \rightarrow \mathbb{R}^n_+$ is Choquet integrable then, for every $p \in \mathbb{R}^n_+$,
\begin{itemize}
\item[\ref{corollario}.a)] $\mu_{p  \cdot g} (X) = \sum_{i=1}^r \mu_{p  \cdot g} (E_i),$
\item[\ref{corollario}.b)]
moreover, if $g$ is constant on each $E_i$ {\rm(}namely $g(x) =c_i$ on $E_i${\rm)}  then
$$\mu_{p  \cdot g} (X) =  \sum_{i=1}^r p \cdot \mu_g (E_i) = p \cdot \mu_g (X).$$
\end{itemize}
\end{proposizione}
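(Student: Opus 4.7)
The proof proposal is to derive both statements as essentially immediate corollaries of Proposition \ref{intadditivo}, since $p\cdot g$ is a nonnegative scalar measurable function whenever $g$ and $p$ are in $\mathbb{R}^n_+$. I would also rely on the fact that, by definition of the vector Choquet integral, the dot product commutes with $\mu_g$ in the sense that $\mu_{p\cdot g}(A)$ need not equal $p\cdot \mu_g(A)$ in general (this is a key warning), but it does whenever we can reduce the computation to the simple-function case of Proposition \ref{intadditivo}.2.

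For part (\ref{corollario}.a), I would simply observe that $p\cdot g:X\to \mathbb{R}_+^0$ is a nonnegative Choquet integrable scalar function, so Proposition \ref{intadditivo}.1 applied to $p\cdot g$ (that is, the case $n=1$) yields directly
\[
\mu_{p\cdot g}(X)=\sum_{i=1}^r \mu_{p\cdot g}(X\cap E_i)=\sum_{i=1}^r\mu_{p\cdot g}(E_i).
\]
No further argument is needed here.

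For part (\ref{corollario}.b), I would use the additional structure that $g=\sum_{i=1}^r c_i\,\mathbf{1}_{E_i}$ is a vector-valued simple function, so that $p\cdot g=\sum_{i=1}^r (p\cdot c_i)\,\mathbf{1}_{E_i}$ is a nonnegative scalar simple function of exactly the form required by Proposition \ref{intadditivo}.2. Applying that proposition to $p\cdot g$ with $A=X$ gives
\[
\mu_{p\cdot g}(X)=\sum_{i=1}^r (p\cdot c_i)\,\mu(E_i).
\]
On the other hand, applying Proposition \ref{intadditivo}.2 componentwise to the vector function $g$ with $A=E_i$ yields $\mu_g(E_i)=c_i\,\mu(E_i)$ (the cross terms vanish since the $E_j$ are pairwise disjoint). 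Plugging back in and using bilinearity of the scalar product,
\[
\sum_{i=1}^r (p\cdot c_i)\,\mu(E_i)=\sum_{i=1}^r p\cdot\bigl(c_i\,\mu(E_i)\bigr)=\sum_{i=1}^r p\cdot\mu_g(E_i),
\]
which gives the first equality in (\ref{corollario}.b). The second equality follows by applying part (\ref{corollario}.a) componentwise to $g$, i.e.\ $\mu_g(X)=\sum_{i=1}^r\mu_g(E_i)$, and then taking the inner product with $p$.

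There is no genuine obstacle here; the only point that requires a moment's care is remembering that for a generic vector $g$ it is not automatic that $\mu_{p\cdot g}(X)=p\cdot \mu_g(X)$, so the argument for (\ref{corollario}.b) must genuinely use that $g$ is constant on each $E_i$ in order to reduce to the simple-function formula of Proposition \ref{intadditivo}.2. Once this observation is in place, everything collapses to the bilinearity of the dot product and the additivity of $\mu$ across the partition $\{E_i\}$ guaranteed by assumption \textbf{(H.0)}.
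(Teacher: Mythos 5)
Your proposal is correct and follows essentially the same route as the paper: part (a) by applying Proposition \ref{intadditivo}.1 to the scalar map $p\cdot g$, and part (b) by exploiting that $p\cdot g=p\cdot c_i$ is constant on each $E_i$, reducing everything to the simple-function formula and the linearity of the inner product. Your version is slightly more explicit about the intermediate term $\sum_{i}p\cdot\mu_g(E_i)$, but the substance is identical.
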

\begin{proof}
\ref{corollario}.a) is a direct consequence of
\ref{intadditivo}.1), just replacing the vector function $g$ with the scalar map $p\cdot g$. The last statement follows
 readily from  the fact that on each $E_i$ the function $p \cdot g = p \cdot c_i$ is constant 
and so, by linearity of $p$ 
\begin{eqnarray*}
\mu_{p  \cdot g} (X) &=&  \mintX p  \cdot g d\mu = \sum_{i=1}^r  \mintEi   p \cdot g d\mu =
\sum_{i=1}^r  \mintEi   p \cdot c_i d\mu = \sum_{i=1}^r   p \cdot c_i \mu(E_i) = \\ &=&
p \cdot \sum_{i=1}^r  c_i \mu(E_i) = p \cdot \mintX g d\mu = p \cdot \mu_g (X).
\end{eqnarray*} 
\end{proof}
 The following lemma is a consequence ot the previous two propositions.
\begin{lemma}\label{eq-zero}
Let $f,g$ be two scalar integrable non-negative mappings, and assume that   and $f>g$ $\mu-a.e.$ on a set $S$ with $\mu(S)>0$. 
If $g$ is constant on each  $E_i$, then $\mu_f (S) > \mu_g(S)$.
\end{lemma}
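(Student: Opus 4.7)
The plan is to reduce the comparison, via the partition $(E_i)_{i\le r}$, to a single index $i_0$ at which $g$ is constant and $\mu(S\cap E_{i_0})>0$. Indeed, Proposition \ref{intadditivo}.1 yields
\[\mu_f(S)=\sum_{i=1}^r \mu_f(S\cap E_i),\qquad \mu_g(S)=\sum_{i=1}^r \mu_g(S\cap E_i),\]
and from $\mu(S)=\sum_i \mu(S\cap E_i)>0$ at least one summand $\mu(S\cap E_{i_0})$ is strictly positive. Denoting by $c_i$ the constant value of $g$ on $E_i$, Proposition \ref{intadditivo}.2 gives $\mu_g(S\cap E_i)=c_i\mu(S\cap E_i)$, while the hypothesis $f>g$ $\mu$-a.e.\ on $S$ implies $f\ge c_i$ $\mu$-a.e.\ on $S\cap E_i$, so monotonicity of the Choquet integral (\ref{De}.iii) yields $\mu_f(S\cap E_i)\ge \mu_g(S\cap E_i)$ for every $i$. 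Summing, it is therefore enough to establish the strict inequality $\mu_f(S\cap E_{i_0})>\mu_g(S\cap E_{i_0})$.

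For this I would expand $\mu_f(S\cap E_{i_0})$ via the layer-cake formula and split the integral at the level $c_{i_0}$. Set $U:=\{f>c_{i_0}\}\cap S\cap E_{i_0}$; since $f>c_{i_0}$ $\mu$-a.e.\ on $S\cap E_{i_0}$, the difference $(S\cap E_{i_0})\setminus U$ is contained in a $\mu$-null set, so by the characterization in Definition \ref{ae} one has $\mu(U)=\mu(S\cap E_{i_0})$, and more generally $\mu(\{f>t\}\cap S\cap E_{i_0})=\mu(S\cap E_{i_0})$ for every $t<c_{i_0}$. Consequently the integral over $[0,c_{i_0})$ contributes exactly $c_{i_0}\mu(S\cap E_{i_0})=\mu_g(S\cap E_{i_0})$, and the task reduces to showing that
\[\int_{c_{i_0}}^{\infty}\mu(\{f>t\}\cap S\cap E_{i_0})\,dt>0.\]

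The main obstacle is precisely this last positivity. The natural idea is to write $U=\bigcup_k T_k$ with $T_k:=\{f>c_{i_0}+1/k\}\cap S\cap E_{i_0}$, an increasing family, and to exhibit some $k_0$ with $\mu(T_{k_0})>0$. Following exactly the contradiction argument used in Proposition \ref{muf}, if every $T_k$ were $\mu$-null then the standing assumption that the ideal of $\mu$-zero sets is closed under countable unions would give $\mu(U)=0$, contradicting $\mu(U)=\mu(S\cap E_{i_0})>0$. Once such a $k_0$ is found, monotonicity of the super-level map $t\mapsto \mu(\{f>t\}\cap S\cap E_{i_0})$ gives the lower bound $\mu(T_{k_0})$ throughout $[c_{i_0},c_{i_0}+1/k_0]$, whence the remaining integral is at least $\mu(T_{k_0})/k_0>0$. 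This delivers the strict inequality on $S\cap E_{i_0}$ and, by the first paragraph, on $S$.
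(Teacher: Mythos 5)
Your proposal is correct and follows essentially the same route as the paper's proof: decompose over the partition $(E_i)$ via Proposition \ref{intadditivo}, obtain the weak inequality on every cell by monotonicity, and get strictness on a cell of positive $\mu$-measure by producing an integer $k$ with $\mu(\{f>c_{i_0}+1/k\}\cap S\cap E_{i_0})>0$ through the $\sigma$-ideal assumption on $\mu$-zero sets. The only cosmetic difference is that you split the layer-cake integral at the level $c_{i_0}$, whereas the paper subtracts the constant using (\ref{De}.iv) and integrates the positive function $f-c_{i_0}$; both yield the same lower bound $\mu(A_k)/k$.
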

\begin{proof} 
Let  $g:=\sommar c_i 1_{E_i}.$
Thanks to Propositions \ref{intadditivo} and \ref{corollario}, it is
$$\iS f d\mu=\sommar \mintSEi   f d\mu,$$
and 
$$\iS g d\mu=\sommar \mintSEi   g d\mu=\sommar  c_i \mu(S\cap E_i).$$
Now, for each $i$ by monotonicity it follows that
$$\mintSEi   f  d\mu\geq c_i \mu(S\cap E_i),$$
so, it will suffice to find an index $i$ such that the strict inequality holds. Since $\mu(S)>0$, there exists at least
 an index $i$ for which $\mu(S\cap E_i)>0$: this is the  requested index, in fact  
in the set $S\cap E_i$ the function $f$ is strictly larger than the constant $c_i$.
From the basic property  ({\ref{De}.iv}) it is
$$\mintSEi   (f -c_i)d\mu=\mintSEi   f d\mu-c_i\mu(E_i\cap S).$$
So,  it is enough to prove that the strictly positive function $f  - c_i$ (in the set $E_i\cap S$) has strictly positive integral.
But now, thanks to the properties of $\mu$-zero sets,
 there exists an integer $k$ such that the set $A_k:=\{a\in S\cap E_i: f  - c_i>\dfrac{1}{k}\}$ has positive measure, 
otherwise $\mu(E_i\cap S)=0$: then 
$$\mintSEi   (f-c_i)d\mu\geq \int_{A_k}^* (f -c_i)d\mu\geq d\frac{1}{k}\mu(A_k)>0$$
and this concludes the proof. 
 \end{proof} 

\section{Walrasian equilibria and core of an economy}\label{walras}

\noindent
The following economic model is introduced: 
\begin{as} \rm 
A \it pure exchange economy \rm is a 4-tuple\acr
$ {\mathcal E}=\{ (X,\mathcal{A},\mu);~ \mathbb{R}_+^n;~ e;~\{\succ_a\}_{a\in X} \},$ where:
\begin{itemize}
\item[\bf (E1)]  ({\sl Perfect competition})   the \sl space of agents \rm is a triple $(X, \mathcal{A}, \mu),$ where $(X, \mathcal{A})$ is
a measurable space and $\mu$ is a fuzzy semiconvex submodular such that the  ideal of $\mu$-zero sets is stable under countable unions and satisfying ({\bf H.0}). Each set $E_i$ denotes the set of agents of type $i$.
\item[\bf (E2)] The finite dimensional space $\erre ^n$ is the \sl commodity space\rm, its positive cone  $\mathbb{R}_+^n$ represents the {\em  consumption set} of each agent.
\item[\bf (E3)] Each consumer $a \in E_i$ is characterized by its initial endowment $e(a) = e_i$.
Since  the initial endowment density $e:X \rightarrow (\mathbb{R}_+^n)^{\circ}$ is simple and  constant on the sets $E_i$, $i=1,\ldots, r$ its aggregate initial endowment is
$\mu_e(X) \in (\mathbb{R}_+^n)^{\circ}$. 
Moreover $\mu_e$ is a fuzzy submodular.
\item[\bf (E4)] $\{\succ_i\}_{i \leq r}$ is the {\sl preference relation} associated to the agents
$a\in E_i$, (namely  for every $a \in E_i$ $x \succeq_a y$ means
$x \succeq_i y$ and this is interpreted as "the boundle $x$ is at least as good as the boundle 
$y$ for the consumer $a\in E_i$").
The preference relation is:
\begin{itemize}
\item[(a)] irreflexive and transitive;
 \item[(b)] ({\sl Monotone}) for every $x\in \mathbb{R}_+^n$ and every $v\in  \mathbb{R}_+^n \setminus\{0\}$, 
$x+v\succ_i x$ for all  $i \leq r$;
 \item[(c)] ({\em continuous})  for all $x \in (\mathbb{R}_+^n)^{\circ}$ the set 
$\{y \in \mathbb{R}_+^n: y \succeq_i x \}$ is closed in $\mathbb{R}_+^n$ for all $i \leq r$.
\end{itemize}
 In other words, in each coalition $E_i$, agents share both the same initial endowment and the same preference
criterion.
\end{itemize}
\end{as}
 The  condition  ({\bf H.0}) for example models an economy with $r$ agents as a continuum of economies where
 the $i$-th agent is the representative of infinitely many {\em  identical} agents. 
Moreover  this model can be considered as  representative of an economy with $r$ non-homogeneus agents, where the relative
 influence of the $i$-th agent is given by the measure $\mu(E_i)$ for every $i \leq r$.\acr

\noindent
The following classical concepts of equilibrium theory are recalled in this  setting:
\begin{itemize}
\item an {\em  allocation}  is an
integrable function $f:X \longrightarrow \mathbb{R}^n_+$; an allocation is {\em  feasible} if
$\mu_f(X) = \mu_e(X)$;
\item a \sl price\rm\
is any element $p\in \mathbb{R}_+^n \setminus \{0\}$;
\item the {\em  budget set}  of an agent $a$
of type $i$
for the price $p$ is the set 
$B_{p}(a)=\{x\in \mathbb{R}_+^n:\quad p x\le p e_i\};$
\item a {\em  coalition} is a measurable subset $S$ of $X $ such that  $\mu(S)>0$.
\item A coalition $S$ can \it improve \rm\ the
allocation $f$ if there exists an allocation $g$ such that
\begin{itemize}
\item[(1)] $g(a)\succ_a f(a) \quad \mu$-a.e. in $S$;
\item[(2)] $\mu_g(S) = \mu_e(S)$.
\end{itemize}
\item A coalition $S$ {\em  strongly  improves} the
allocation $f$ if there exists an allocation $g$ such that
\begin{itemize}
\item[(1)] $g(a)\succ_a f(a)\quad  \mu$-a.e. in $S$;
\item[($2^{\prime}$)] $\mu_g (S\cap E_i) = \mu_e (S\cap E_i)$
for all $i=1,...,r$.
\end{itemize}
\item \rm The \it core \rm  $C({\mathcal E})$ of an economy ${\mathcal E}$ is the set
of all the feasible allocations that cannot be improved by any coalition.
\item \rm The \it large core \rm $LC({\mathcal E})$ of an economy ${\mathcal E}$ is the set
of all the feasible allocations that cannot be strongly improved by any coalition.
It is clear that $f\notin \LCEs$ implies $f \notin \CEs$, so $\CEs\subset \LCEs$.
\item\rm\ A {\em  Walras equilibrium }
of ${\mathcal E}$ is a pair
$(f,p)\in L^1_C(\mu, \mathbb{R}_+^n)\times(\mathbb{R}_+^n \setminus \{0\})$
such that:
\begin{itemize}
\item[(i)] $f$ is a feasible allocation;
\item[(ii)] $f(a)$ is a maximal element of $\succ_a$ in the
budget set $B_{p}(a)$, 
(namely $f(a) \in B_p (a)$ and $x \succ_a f(a)$ implies $p \cdot x > p \cdot e(a)$)
for $\mu$-almost all $a\in X$.
\end{itemize}
\item A {\it walrasian allocation} is a feasible allocation $f$ such
that there exists a price $p$ so that the pair $(f,p)$ is a Walras
equilibrium.
\item $W({\mathcal E})$ is the set of all the walrasian allocations of ${\mathcal E}$.
\end{itemize}

The aim of this research is to obtain relations between  Walras equilibria  ${\mathcal W}(\mathcal{E})$ 
and core of an economy ${\mathcal C}({\mathcal E})$.
\noindent
In order to study relations between ${\mathcal C}({\mathcal E})$ and  ${\mathcal W}(\mathcal{E})$,  
the following inclusion is proved. 
\begin{theorem}\label{WESCES}
Under assumptions \rm ({\bf E1}) -- ({\bf E4}),
\em there holds $\CEs \supset \WEs$.
\end{theorem}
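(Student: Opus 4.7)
The plan is to argue by contradiction in the classical Walras-core style, with the non-linearity of the Choquet integral handled by pairing Lemma \ref{eq-zero} (which gives a strict inequality in one direction) with the subadditivity coming from submodularity via Proposition \ref{De}.v) (which gives a weak inequality in the opposite direction).

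Concretely, let $(f,p)\in W(\mathcal{E})$, and suppose for contradiction that $f\notin C(\mathcal{E})$. Then there exist a coalition $S$ and an allocation $g$ with $g(a)\succ_a f(a)$ $\mu$-a.e.\ on $S$ and $\mu_g(S)=\mu_e(S)$. The Walrasian maximality (ii) applied pointwise on $S$ gives $p\cdot g(a) > p\cdot e(a)$ for $\mu$-a.e.\ $a\in S$. Since $e$ is constant on each $E_i$ by (E3), so is $p\cdot e$; hence the hypotheses of Lemma \ref{eq-zero} are met (with $f:=p\cdot g$ and $g:=p\cdot e$, both non-negative thanks to $p\in\mathbb{R}_+^n$ and $g,e\in\mathbb{R}_+^n$), and one obtains the strict inequality
\[
\mu_{p\cdot g}(S) \;>\; \mu_{p\cdot e}(S).
\]

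Next I would evaluate each side in terms of the vector integrals. For the right-hand side, since $p\cdot e$ is constant on each $E_i$, Proposition \ref{intadditivo}.1) combined with Proposition \ref{corollario}.b) (or a direct check using constancy and (H.0)) yields $\mu_{p\cdot e}(S)=p\cdot\mu_e(S)$. For the left-hand side, the Choquet integral is only subadditive, but that is enough: writing $p\cdot g=\sum_j p_j g_j$ with $p_j\geq 0$ and applying Proposition \ref{De}.v) iteratively to the non-negative functions $p_j g_j\uno_S$, submodularity of $\mu$ gives
\[
\mu_{p\cdot g}(S) \;\leq\; \sum_{j=1}^n p_j \,\mu_{g_j}(S) \;=\; p\cdot \mu_g(S).
\]
Chaining these three relations produces $p\cdot\mu_g(S) > p\cdot\mu_e(S)$, which directly contradicts the feasibility condition $\mu_g(S)=\mu_e(S)$ of the improving coalition. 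Hence no such $S$ exists, and $f\in C(\mathcal{E})$.

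The only genuinely delicate point is that the Choquet integral runs in the \emph{wrong} direction for the improving allocation $g$ (we only have $\mu_{p\cdot g}(S)\leq p\cdot\mu_g(S)$, not equality), so one must be careful that the strict inequality produced by Lemma \ref{eq-zero} is not consumed by this loss. This is why the argument is set up so that the side where equality fails is the one compared with $\mu_g(S)$ via subadditivity, while the side featuring $e$ (constant on the pieces $E_i$) gives a genuine equality $\mu_{p\cdot e}(S)=p\cdot\mu_e(S)$; the combined chain of inequalities is then strict, and the contradiction closes without needing any stronger form of linearity.
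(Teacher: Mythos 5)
Your proof is correct and follows essentially the same route as the paper's: derive $p\cdot g(a)>p\cdot e(a)$ $\mu$-a.e.\ on $S$ from Walrasian maximality, apply Lemma \ref{eq-zero} together with Proposition \ref{corollario}.b) to get the strict inequality $\mu_{p\cdot g}(S)>p\cdot\mu_e(S)$, and contradict it via the submodularity-based subadditivity $\mu_{p\cdot g}(S)\leq p\cdot\mu_g(S)=p\cdot\mu_e(S)$. Your explicit remark on why the one-sided (subadditive) estimate suffices is a welcome clarification of a point the paper leaves implicit, but the argument is the same.
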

\begin{proof}
Let $f \in \WEs \setminus \CEs$.
Then there exist a coalition  $S$ and a feasible allocation $g$ 
 such that $\mu$-a.e. in $S$
$g(a) \succ_a f(a)$~ and ~ 
$\mu_g (S) = \mu_e (S)$.
On the other side there exists a price  $p$ for which $f(a)$ is $\succ _a$ maximal in  $B_p (a)$ $\mu$-a.e.
in $S$. \par \noindent
Consequently, setting $S_1 = \{ a \in S : p \cdot g(a) \leq p \cdot e(a) \},$ it should be $\mu(S_1) = 0$,
otherwise 
the elements $\{x=g(a):a\in S_1\}$ would contradict maximality of $f$, since $g(a)\succ_a f(a)$ but $p\cdot g(a)\leq p\cdot f(a)$
 for all $a\in S_1.$
Hence $\mu$-a.e. in $S$ one has:
\begin{eqnarray*}
p \cdot g(a) &=& \sum_{i=1}^n p_i g_i(a) > \sum_{i=1}^n p_i e_i (a) = p \cdot e(a)
\end{eqnarray*}
whence, by \ref{corollario}.b) and applying Lemma \ref{eq-zero} with $p\cdot g$ and $p\cdot e$ in place of
 $f$ and $g$ respectively,
\begin{eqnarray*}
\iS p \cdot g(a) d\mu&=& \iS \sum_{i=1}^n p_i g_i(a) d\mu>
\iS \sum_{i=1}^n p_i e_i (a) d\mu=
\sum_{i=1}^n p_i \iS e_i (a) d\mu= \\ &=& 
p \cdot \iS e d\mu.
\end{eqnarray*}
Thus
\begin{eqnarray}\label{uno}
\iS p \cdot g d\mu> p \cdot \iS e d\mu.
\end{eqnarray}
On the other side, since $g$ improves $f$, it is:
\begin{eqnarray*}
\iS p \cdot g d\mu\leq p \cdot \iS g(a) d\mu= p \cdot \iS e d\mu
\end{eqnarray*}
and this contradicts (\ref{uno}).
\end{proof}

Of course, Theorem \ref{WESCES} proves also that $\WEs\subset \LCEs$.

\begin{as}\label{gamma} \rm
 Suppose now that the allocation $f$ 
has a representation of the following type:
$$f(a) = \sommar a_i 1_{E_i}$$
 and consider the multifunction
$$\Gamma_f(a) := \{x \in \mathbb{R}_+^n: x \succeq_a f(a)\} =\sommar C_i 1_{E_i},$$
 where the $C_i$ are convex, closed and contain the sets $y + (\mathbb{R}_+^n)^{\circ}$ when $y \in C_i$. 
The class of its Choquet integrable selections is
\[S^*_{\Gamma_f} = \{ \psi \in L^1_{C} (\mu, \mathbb{R}_+^n) \text{ with } \psi (a) 
\in \Gamma_f (a) \text{ for } \mu - \text{ a.e. } a\in X\}.\]
\end{as}

\begin{osservazione}\label{gamma1} \rm
Since $\Gamma_f$ is simple  it contains as selections all functions 
that are $\mu$-a.e. constant in $E_i$ (the constant must be an element of $C_i$).
So all functions of the type $\sum_{i=1}^r c_i 1_{E_i}, c_i \in C_i$ 
are Choquet integrable selections of $\Gamma_f$.
For every $E$ in $\mathcal{A} $ let
$$M_{\Gamma_f} (E) = \left\{
\mu_{\psi} (E),~~ \psi \in S^*_{\Gamma_f} \right\}$$
and  consider its {\it range} 
$R(M_{\Gamma_f} ) = \bigcup _{E\in \mathcal{A}} M_{\Gamma_f} (E).$\acr
\par \noindent
\end{osservazione}

Let 
\begin{eqnarray}\label{I}
I_f := \left\{ z = \mu_s (A) - \mu_e (A), \quad \forall \quad A \in \mathcal{A},
\quad s \in S^*_{\Gamma_f} \right\}.
\end{eqnarray}

Now, in order to prove the convexity of $I$  some preliminary results are needed;
the first is a density result of the multivalued integral of $\Gamma_f$.\acr

\begin{lemma}\label{gsemplice}
If $s \in S^*_{\Gamma_f}$ then, for every $A \in \mathcal{A}$, there exists a "simple" 
selection $g \in S^*_{\Gamma_f}$ such that
$ \mu_s (A) = \mu_g (A)$.
\end{lemma}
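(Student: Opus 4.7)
The plan is to construct a simple selection $g=\sum_{i=1}^r c_i\mathbf{1}_{E_i}$ with $c_i\in C_i$ whose indefinite Choquet integral matches that of $s$ on the prescribed set $A$. By Proposition \ref{intadditivo}.1 and Proposition \ref{corollario}.b) the two sides split along the partition $\{E_i\}$:
\[
\mu_g(A)=\sum_{i=1}^r c_i\,\mu(A\cap E_i),\qquad \mu_s(A)=\sum_{i=1}^r \mu_s(A\cap E_i),
\]
so it is enough to solve the slice-by-slice identity $c_i\,\mu(A\cap E_i)=\mu_s(A\cap E_i)$ for every $i$. On a slice with $\mu(A\cap E_i)=0$, monotonicity of the Choquet integral forces $\mu_s(A\cap E_i)=0$, hence any $c_i\in C_i$ will do (the set is nonempty because $s$ takes its values there).

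The substantive step is the slice with $\mu(A\cap E_i)>0$, where the only candidate is
\[
v_i:=\frac{\mu_s(A\cap E_i)}{\mu(A\cap E_i)}\in\mathbb{R}^n_+,
\]
and one must verify that $v_i\in C_i$. The plan is to argue by contradiction. Since $C_i$ is closed and convex in $\mathbb{R}^n$, finite-dimensional strict separation supplies $p\in\mathbb{R}^n\setminus\{0\}$ and $\alpha\in\mathbb{R}$ with $p\cdot v_i<\alpha\le p\cdot c$ for every $c\in C_i$. The upward-closedness of $C_i$ (it contains $y+(\mathbb{R}^n_+)^\circ$ for any $y\in C_i$) forces $p\in\mathbb{R}^n_+$: otherwise one could drive $p\cdot c$ to $-\infty$ along a ray $y+tz$ with $z\in(\mathbb{R}^n_+)^\circ$, contradicting the lower bound on $C_i$.

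Coupling this separation with submodularity closes the argument. Since $s(a)\in C_i$ for $\mu$-a.e.\ $a\in E_i$, one has $p\cdot s(a)\ge\alpha$ $\mu$-a.e.\ on $A\cap E_i$, whence by monotonicity (trivially so when $\alpha\le 0$, using $p\cdot s\ge 0$)
\[
\int^*_{A\cap E_i}(p\cdot s)\,d\mu\ge\alpha\,\mu(A\cap E_i).
\]
Iterating Proposition \ref{De}.v on the finite nonnegative sum $p\cdot s=\sum_j p_j s_j$ gives the reverse inequality
\[
\int^*_{A\cap E_i}(p\cdot s)\,d\mu\le\sum_{j=1}^n p_j\int^*_{A\cap E_i} s_j\,d\mu=p\cdot\mu_s(A\cap E_i)=\mu(A\cap E_i)\,p\cdot v_i.
\]
Dividing by $\mu(A\cap E_i)>0$ yields $p\cdot v_i\ge\alpha$, contradicting the separation. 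Hence $v_i\in C_i$; setting $c_i:=v_i$ on the active slices and an arbitrary element of $C_i$ on the others produces the required simple selection. The main obstacle is precisely this separation step: because the Choquet integral is nonlinear, one must use both submodularity and the upward-closedness of each $C_i$ in order to realize the normalized vector average $v_i$ as a point of the constraint set.
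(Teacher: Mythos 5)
Your proposal is correct and follows essentially the same route as the paper: the heart of both arguments is the ``mean value'' step showing that the normalized average $\mu_s(A\cap E_i)/\mu(A\cap E_i)$ lies in $C_i$, proved by Hahn--Banach separation combined with the subadditivity of the Choquet integral coming from submodularity (Proposition \ref{De}.v), after splitting the integral along the partition via {\bf (H.0)}. Your version is marginally tidier in two respects --- you justify why the separating functional can be taken in $\mathbb{R}^n_+$ (the paper merely asserts it) and you make $g$ genuinely simple on the null slices instead of keeping $s$ there --- but these are cosmetic, not a different method.
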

\begin{proof}
Let $A \in \mathcal{A}$ and $s = (s_1, s_2, \ldots, s_n)  \in S^*_{\Gamma_f}$ be fixed.
First of all  observe that,
for every $i \in J = \{i \leq r : \mu(A \cap E_i) > 0\}$,  the vectors
\begin{eqnarray}\label{teorema-media}
w^{(i)} := \dfrac{\int^*_{A \cap E_i} s d\mu}{\mu (A \cap E_i)} \in C_i.
\end{eqnarray}
This is the mean value theorem for the Choquet integral and it is a consequence of 
the Hahn-Banach theorem and ({ \ref{De}.v}), 
as in the countably additive case.\acr
Suppose in fact, by contradiction, that there exists $j\in J$ such that
$w^{(j)} \not\in C_j$, then by the Hahn-Banach theorem there exist a positive 
 functional $p$ and a positive  number $a$ such that
\[ \sum_{i=1}^n p_i w^{(j)}_i < a \leq \sum_{i=1}^n p_i y_i, ~ \forall (y_i)_i \in  C_j.\]
Then, in particular, for all $x \in E_j$,
\begin{eqnarray*}
\sum_{i=1}^n p_i w^{(j)}_i &=& \sum_{i=1}^n p_i \dfrac{\mintAEi s_i d\mu}{\mu (A \cap E_j)} < a
 \leq \sum_{i=1}^n p_i s_i(x)
\end{eqnarray*}
and  integrating on $A \cap E_j$ it follows that
\begin{eqnarray*}
\int^*_{A \cap E_j} \sum_{i=1}^n p_i w^{(j)}_i  d\mu &<& a \mu(A \cap E_j) \leq
 \int^*_{A \cap E_j} \sum_{i=1}^n p_i s_i(x) d\mu \leq \\
&\leq& \sum_{i=1}^n p_i \int^*_{A \cap E_j} s_i d\mu =\sum_{i=1}^n p_iw_i^{(j)}\mu(A\cap E_j) < 
a \mu(A \cap E_j)
\end{eqnarray*}
which is clearly absurd. \acr
Let
$E= \cup_{j \in J} E_j$ and
$g(x) = \sum_{i \in J} w^{(i)} \uno_{E_i} + s(x) \uno_{X \setminus E}$.
Then $g \in S^*_{\Gamma_f}$ because it is a "sum" of selections.
Now the following equality
\[ \int^*_A s d\mu= \int^*_{A \cap E} g d\mu= \int^*_A g d\mu\]
 will be proven  component by component. Let $k \leq n$:
\begin{eqnarray}\label{ints-intg}
\int^*_A s_k d\mu&=& \nonumber \int_{A \cap E}^* s_k d\mu =
\int_0^{\infty} \mu (\{x \in A \cap (\cup_{i \in J} E_i : s_k(x) > t\}) dt = \\  &=& \nonumber
\int_0^{\infty} \sum_{i \in J} \mu (\{x \in A \cap E_i : s_k (x) > t\}) dt =
\\ &=& \nonumber
\sum_{i \in J} \int_0^{\infty}  \mu (\{x \in A \cap E_i : s_k(x) > t\}) dt =\\ &=& 
\sum_{i \in J} \int^*_{A \cap E_i} s_k d\mu
= \sum_{i \in J} w^{(k)}_i \mu (A \cap E_i) = \int^*_{A \cap E} g_k d\mu.
\end{eqnarray}
Then the vector inequality
\[ \int^*_A s d\mu= \int^*_{A \cap E} g d\mu \quad \underline{\ll } \mintA  g d\mu\]
is proven thanks to the monotonicity of $\mu_g$ and {\bf (H.0)}.
Finally, since $\mu(A \setminus E) = 0$, 
 it follows easily
$$ \mintA  g d\mu =  \int^*_{A \cap E} g d\mu.$$
 \end{proof}

\begin{osservazione}\label{iperi}\rm
Observe that, in the previous Lemma, it has been proved that, as soon as $s \in S^*_{\Gamma_f}$,
 for every fixed $i\leq r$ and every measurable $A\subset E_i$ an element $w^{(i)}\in C_i$ can be found, such that
$$\mintA  s d\mu=w^{(i)}\mu(A).$$
\end{osservazione}

\begin{proposizione}\label{slsemiconvex}
Let $\mu: \mathcal{A} \rightarrow \mathbb{R}^0_+$ be a semiconvex fuzzy measure with property {\rm \bf(H.0)}.
Then the range $R(\mu, \mu_e)$ is convex.
\end{proposizione}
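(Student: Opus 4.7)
The plan is to exploit both property $(\mathbf{H.0})$ and Lemma~\ref{CM2.1} to parametrize the range $R(\mu,\mu_e)$ by an $r$-dimensional box. Since $e=\sum_{i=1}^r e_i\uno_{E_i}$ is simple with constant value $e_i$ on each $E_i$, Proposition~\ref{intadditivo}.2) gives $\mu_e(A)=\sum_{i=1}^r e_i\,\mu(A\cap E_i)$, while $(\mathbf{H.0})$ itself says $\mu(A)=\sum_{i=1}^r \mu(A\cap E_i)$. Thus both $\mu(A)$ and $\mu_e(A)$ depend on $A$ only through the $r$-tuple
\[
\Phi(A):=\bigl(\mu(A\cap E_1),\ldots,\mu(A\cap E_r)\bigr)\in \erre^r.
\]

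The second step is to show that $\Phi(\mathcal{A})$ coincides with the full box $\prod_{i=1}^r [0,\mu(E_i)]$. Given a target $(t_1,\ldots,t_r)$ with $0\le t_i\le \mu(E_i)$, I would apply Lemma~\ref{CM2.1} separately inside each $E_i$ (taking the parameter $t=t_i/\mu(E_i)$ when $\mu(E_i)>0$, and $A_i=\vuoto$ otherwise) to produce $A_i\in\mathcal{A}$, $A_i\subset E_i$, with $\mu(A_i)=t_i$. Setting $A:=\bigcup_{i=1}^r A_i$ and using pairwise disjointness of the $E_i$, one has $A\cap E_i=A_i$, hence $\mu(A\cap E_i)=t_i$ for every $i$, that is $\Phi(A)=(t_1,\ldots,t_r)$.

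The conclusion is then immediate: $R(\mu,\mu_e)$ is the image of the box $\prod_{i=1}^r [0,\mu(E_i)]$ under the linear map $\Psi:\erre^r\to \erre\times\erre^n$ defined by
\[
\Psi(t_1,\ldots,t_r)=\Bigl(\sum_{i=1}^r t_i,\ \sum_{i=1}^r e_i t_i\Bigr),
\]
and the linear image of a convex set is convex.

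The main obstacle is the realization step: producing a single $A\in\mathcal{A}$ that simultaneously hits prescribed partial measures on every $E_i$. This is precisely what semiconvexity buys, via the slicing families supplied by Lemma~\ref{CM2.1} applied coordinate-wise; the additive cut $(\mathbf{H.0})$ then guarantees that the contributions coming from the various $E_i$ do not interfere, so that the whole construction reduces to a single disjoint union. Everything else (the decomposition of $\mu$ and $\mu_e$, and the passage to a linear image in $\erre^{n+1}$) is routine once $(\mathbf{H.0})$ is in hand.
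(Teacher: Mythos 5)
Your proof is correct and takes essentially the same route as the paper: the paper writes $R(\mu,\mu_e)$ as the Minkowski sum of the segments $R_i$ obtained by restricting $(\mu,\mu_e)$ to each $E_i$ (each $R_i$ being a segment because $e$ is constant on $E_i$ and the range of $\mu$ on $E_i$ is the full interval $[0,\mu(E_i)]$), which is exactly your description of $R(\mu,\mu_e)$ as the linear image of the box $\prod_{i=1}^r[0,\mu(E_i)]$. Your explicit realization step (building $A=\bigcup_i A_i$ with prescribed $\mu(A\cap E_i)$) is precisely the content hidden in the paper's remark that $R=\sum_{i=1}^r R_i$ follows easily from ({\bf H.0}).
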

\begin{proof} Let  $R_i$ be the range of the $(n+1)$-dimensional fuzzy measure 
$(\mu, \mu_e)$, when restricted to $E_i$, $i=1,...,r$ and let $i\leq r$. Since $e$ is constant in $E_i$, it is
$$(\mu,\mu_e (A))=\mintA (1,e)d\mu=(\mu(A),\mu(A)e)$$ for every $A\subset E_i$. Since the range of $\mu$, when restricted to $E_i$, is the interval $[0,\mu(E_i)]$, then $R_i$ is the segment joining the origin with the point $(\mu(E_i),\mu(E_i)e)$. From the property {\bf (H.0)} and Proposition \ref{intadditivo}, it follows easily that $R=\sum_{i=1}^r R_i$, hence $R$ is convex.
\end{proof}

\begin{osservazione}\label{lambdaconvessa}\rm
Clearly, from Proposition \ref{slsemiconvex} it follows also that the range of $\mu_e$ ($R(\mu_e )$) is convex. In particular, given $A$ and $B$ in ${\mathcal A}$, and fixed $t\in [0,1]$, for each $i=1,...,r$ there exists a measurable set $D_t^i\subset E_i$ such that 
$$\mu_e(D_t^i)=t\mu_e (A\cap E_i)+(1-t)\mu_e (B\cap E_i).$$
Then clearly the set $D_t=\bigcup_iD_t^i$ satisfies
$\mu_e (D_t)=t\mu_e (A)+(1-t)\mu_e (B).$\\
\end{osservazione}

Using Lemma \ref{gsemplice} and Proposition \ref{slsemiconvex} the following result holds
\begin{theorem}\label{1.3.1.cond-c}
The set ${I_f}$ given in formula {\rm(\ref{I}\rm)} is convex.
\end{theorem}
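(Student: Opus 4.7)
The plan is to take two arbitrary elements $z_1=\mu_{s_1}(A_1)-\mu_e(A_1)$ and $z_2=\mu_{s_2}(A_2)-\mu_e(A_2)$ in $I_f$ and a scalar $t\in(0,1)$, and to produce a selection $s\in S^*_{\Gamma_f}$ together with a measurable set $D$ such that $\mu_s(D)-\mu_e(D)=tz_1+(1-t)z_2$. The first move is to invoke Lemma \ref{gsemplice} (and the sharpened Remark \ref{iperi}) to replace each $s_j$ by a simple selection, constant on every $E_i$ with some value $w_j^{(i)}\in C_i$, without changing the values $\mu_{s_j}(A_j)$. After this reduction, all quantities on the $A_j$ side split into finite sums indexed by $i=1,\ldots,r$ thanks to Proposition \ref{intadditivo}.1.

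Next, for each $i$ set $a_i=\mu(A_1\cap E_i)$ and $b_i=\mu(A_2\cap E_i)$. Using the semiconvexity of $\mu$ restricted to $E_i$ (Lemma \ref{CM2.1}, or equivalently Remark \ref{lambdaconvessa} applied coalition by coalition), I would pick $D^i\subset E_i$ with $\mu(D^i)=ta_i+(1-t)b_i$, and put $D=\bigcup_i D^i$. Because $e$ is constant on each $E_i$, one immediately has $\mu_e(D)=t\mu_e(A_1)+(1-t)\mu_e(A_2)$. On $E_i$ I define $s$ to take the constant value
\[
c_i=\frac{tw_1^{(i)}a_i+(1-t)w_2^{(i)}b_i}{ta_i+(1-t)b_i}
\]
when the denominator is positive, and any element of $C_i$ otherwise; the key point is that $c_i$ is a convex combination of $w_1^{(i)}$ and $w_2^{(i)}$, hence lies in the convex set $C_i$, so $s\in S^*_{\Gamma_f}$. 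Since $s$ is constant $c_i$ on $D^i\subset E_i$, the very definition of the Choquet integral gives $\mu_s(D^i)=c_i\mu(D^i)=ta_iw_1^{(i)}+(1-t)b_iw_2^{(i)}$.

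Summing over $i$ and using Proposition \ref{intadditivo}.1 once more,
\[
\mu_s(D)=\sum_{i=1}^r\bigl(ta_iw_1^{(i)}+(1-t)b_iw_2^{(i)}\bigr)=t\mu_{s_1}(A_1)+(1-t)\mu_{s_2}(A_2),
\]
and subtracting the analogous identity for $\mu_e(D)$ yields exactly $tz_1+(1-t)z_2$.

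The main obstacle to watch is the interaction between the Choquet integral and the splitting of $E_i$: since $\mu$ is generally not additive within a single $E_i$, one cannot freely break $D^i$ into pieces with different values of $s$, so it is essential to choose $s$ \emph{constant} on each $D^i$ and let the convex combination land inside $C_i$. The degenerate case $ta_i+(1-t)b_i=0$ is harmless precisely because then $\mu(D^i)=0$ and the corresponding contribution to both $\mu_s(D)$ and to $ta_iw_1^{(i)}+(1-t)b_iw_2^{(i)}$ vanishes.
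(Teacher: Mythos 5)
Your proof is correct and rests on exactly the same ingredients as the paper's: the coalition-by-coalition decomposition from Proposition \ref{intadditivo}, the mean-value reduction to simple selections from Lemma \ref{gsemplice} and Remark \ref{iperi}, the full range $[0,\mu(E_i)]$ of $\mu$ restricted to $E_i$ coming from semiconvexity, and the convexity of the sets $C_i$. The only difference is presentational: the paper packages these steps into the explicit identification $I_i=[0,\mu(E_i)]\,(C_i-e_i)$ and sums the convex sets $I_i$, whereas you verify the convex combination directly; the weighted-average computation producing your $c_i$ is precisely what makes the paper's set $[0,\mu(E_i)](C_i-e_i)$ convex.
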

\begin{proof}
Thanks to Proposition \ref{intadditivo}  it is easy to see that
$I_f=\sommar I_i,$
where 
$$I_i:=\{z= \mu_s(A) -\mu_e (A), A\in \mathcal{A} \cap E_i, s\in S^*_{\Gamma_f}\}.$$
Then, it will suffice to prove that each $I_i$ is convex. 
In particular it will be proven that
$I_i=[0,\mu(E_i)]  (C_i-e_i).$ 
Since $C_i$ is convex, so is $C_i-e_i$ and also the cone $[0,\mu(E_i)](C_i-e_i)$.
So, fix any element $z\in I_i$: then there exist $s\in S^*_{\Gamma_f}$ and a measurable $A\subset E_i$ such that
$$z=  \mu_s(A)-\mu_e (A)=  \mu_s(A) -e_i\mu(A).$$
Thanks to the Remark (\ref{iperi}), there exists an element $w\in C_i$ such that $\int_A sd\mu=w\mu(A)$, hence
$$z=(w-e_i)\mu(A)\in (C_i-e_i)\mu(A)\subset(C_i-e_i)[0,\mu(E_i)].$$
Conversely, for every $w\in C_i$, there exists a selection $s$ such that $s_{|E_i}$ is constantly equal to $w$. Moreover, for any real number $x\in[0,\mu(E_i)]$ there exists a measurable set $A\subset E_i$ such that $\mu(A)=x$ and $\mu_e (A)=e_ix$. Therefore
$$(w-e_i)x=  \mu_s(A) -\mu_e (A)\in I_i.$$
From arbitrariness of $w$ and $x$, it follows the converse inclusion, $(C_i-e_i)[0,\mu(E_i)]\subset I_i$. \end{proof}

\begin{lemma}\label{separazione}
If $f\in \LCEs$, there exists $p \in \mathbb{R}_+^n$, $p \neq 0$ such that $p \cdot x \geq 0$ for all $x \in \overline{I}_f$.
\end{lemma}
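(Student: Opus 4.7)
The plan is to apply the geometric Hahn--Banach theorem, adapted from the classical Aumann--style core--Walras separation, to separate the closed convex set $\overline{I_f}$ from the open convex cone $-(\mathbb{R}_+^n)^\circ=\{y\in\erre^n:y\ll 0\}$. The set $I_f$ is convex by Theorem \ref{1.3.1.cond-c} and contains the origin (take $A=\emptyset$), so $\overline{I_f}$ is convex and contains $0$.

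The crucial step is to show $\overline{I_f}\cap(-(\mathbb{R}_+^n)^\circ)=\emptyset$. Since $-(\mathbb{R}_+^n)^\circ$ is open, it suffices to rule out $z\in I_f$ with $z\ll 0$. Assume $z=\mu_s(A)-\mu_e(A)\ll 0$ for some $s\in S^*_{\Gamma_f}$ and $A\in\mathcal{A}$. Via Lemma \ref{gsemplice} and Remark \ref{iperi}, $s$ may be replaced by a simple selection $g=\sum_i w^{(i)}\uno_{E_i}$ with $w^{(i)}\in C_i$ and $\mu_g(A\cap E_i)=w^{(i)}\mu(A\cap E_i)$; setting $t_i=\mu(A\cap E_i)\in[0,\mu(E_i)]$,
\[
z=\sum_{i=1}^r t_i(w^{(i)}-e_i)\ll 0.
\]
Monotonicity of the preferences lets us perturb each $w^{(i)}$ to $\tilde w^{(i)}=w^{(i)}+\varepsilon\uno\in C_i$, in fact with $\tilde w^{(i)}\succ_i f_i$, while preserving $\sum_i t_i(\tilde w^{(i)}-e_i)\ll 0$. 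Using the semiconvexity of $\mu$ (Lemma \ref{CM2.1} and Proposition \ref{zero}), the convexity of each $C_i$ and assumption (H.0), one then constructs a coalition $S=\bigcup_i S_i$ with $S_i\subset E_i$ and an allocation $h$ on $S$, generally two-valued on each $S_i$, whose two-point Choquet average, computed via formula (\ref{sommemon}), equals $e_i$; this gives $\mu_h(S\cap E_i)=\mu_e(S\cap E_i)$ while $h\succ_a f(a)$ holds $\mu$-a.e.\ on $S$. Such a pair $(S,h)$ strongly improves $f$, contradicting $f\in\LCEs$.

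With disjointness established, the geometric Hahn--Banach theorem produces $p\in\erre^n\setminus\{0\}$ and $\alpha\in\erre$ with $p\cdot x\ge\alpha$ on $\overline{I_f}$ and $p\cdot y\le\alpha$ on $-(\mathbb{R}_+^n)^\circ$. Choosing $y^{(N)}\in -(\mathbb{R}_+^n)^\circ$ with $k$-th coordinate $-N$ and the others $-\varepsilon$, and letting $N\to\infty$, forces $p_k\ge 0$ for every $k$, so $p\in\mathbb{R}_+^n$. Since $0\in\overline{I_f}$ we get $\alpha\le 0$; letting $y=-\varepsilon\uno\to 0$ inside $-(\mathbb{R}_+^n)^\circ$ gives $\alpha\ge 0$; hence $\alpha=0$ and $p\cdot x\ge 0$ on $\overline{I_f}$.

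The main obstacle is the construction of the strongly improving pair $(S,h)$ from a $z\ll 0$ in $I_f$. Taking a constant $h=e_i$ on each $S_i$ respects the per-coalition balance but forces strict preference to rely on $e_i\succ_i f_i$, which is not granted a priori. The workaround is a two-valued $h$ whose Choquet average on $S_i$ equals $e_i$, with both values in $\{x:x\succ_i f_i\}$; the strict slack in $z\ll 0$, the convexity of $C_i$ and the richness of the range of $\mu$ on each $E_i$ coming from semiconvexity are precisely what make these endpoints simultaneously available for every $i$.
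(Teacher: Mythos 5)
Your overall architecture (convexity of $I_f$, disjointness of $I_f$ from the open negative orthant and hence of $\overline{I}_f$ by openness, strong separation, then forcing $p\in\mathbb{R}_+^n$ and the threshold $\alpha=0$) is sound, and the final separation steps are handled correctly. The gap is exactly at the point you yourself flag as ``the main obstacle'': the passage from a point $z=\mu_s(A)-\mu_e(A)\ll 0$ to a coalition that \emph{strongly} improves $f$. You assert that ``one constructs'' a coalition $S=\bigcup_iS_i$ and a two-valued allocation $h$ on each $S_i$ whose Choquet average equals $e_i$ with both values in $\{x:x\succ_i f_i\}$, but no construction is actually given, and as described it cannot work in general. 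Any allocation whose values on $S_i$ lie in $\{x:x\succ_i f_i\}\subset C_i$ has, by the mean value theorem underlying Lemma \ref{gsemplice} (see Remark \ref{iperi}), a Choquet average over $S_i$ that belongs to the closed, convex, upward-closed set $C_i$; requiring that average to equal $e_i$ therefore forces $e_i\in C_i$, i.e. $e_i\succeq_i f_i$, which is not available a priori. This is the very obstruction you noticed for the constant choice $h=e_i$, and the two-valued variant is subject to it in exactly the same way. Neither the strict slack in $z\ll 0$, nor semiconvexity, nor convexity of $C_i$ removes it, because the slack is an \emph{aggregate} condition over all the $E_i$'s while condition $(2')$ in the definition of strong improvement is a \emph{per-type} balance; redistributing the aggregate surplus $-z$ among the types yields only a weak improvement, which contradicts $f\in\CEs$ but not $f\in\LCEs$. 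Since the lemma is stated for the large core, this is precisely the crux, and it is missing.

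The paper's mechanism is different and is the one you should engage with: it decomposes $z=\sum_j z_j$ with $z_j=\mu_s(A\cap E_j)-e^j\mu(A\cap E_j)$ and, on each $E_j$ with $z_j\neq 0$, replaces $s$ by the translate $s-\frac{z_j}{\mu(A\cap E_j)}$. By the translation property (\ref{De}.iv) of the Choquet integral this restores the per-type balance \emph{exactly}, $\int^*_{A\cap E_j}s_0\,d\mu=e^j\mu(A\cap E_j)$, and whenever $-z_j\in\mathbb{R}_+^n\setminus\{0\}$ monotonicity of the preferences gives $s_0(a)\succ_a s(a)\succeq_a f(a)$; no semiconvexity, no two-valued functions and no membership $e_i\in C_i$ are required. (Even there, the assertion that each $z_j$ separately lies in $-\mathbb{R}_+^n$ when only their sum is assumed to do so deserves scrutiny.) To repair your argument you would need either to justify a per-type sign condition on the $z_j$'s or to adopt this translation device; the construction you sketch does not close the gap.
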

\begin{proof}
First
$I_f \cap (-\mathbb{R}_+^n) = \{ 0\}$ will be proven.
Indeed, assume by contradiction that there exists
$z\in I_f \cap (-\mathbb{R}_+^n)$ with $z\not= 0$. Then there exist a coalition $A\in \mathcal{A}$ and
a Choquet integrable selection $s\in S^1_{\Gamma_f}$ such that
$$z =  \mu_s(A) -  \mu_e(A) \in (-\mathbb{R}_+^n).$$
Then it follows 
immediately that $\mu(A) > 0$ (otherwise both  $\mu_s(A) = 0$
and $ \mu_e(A) = 0$
whence $z = 0$).\acr
Observe that $z=\sum_{i=1}^r z_i$, where $z_i= \mu_s(A \cap E_i) - e^i\mu(A\cap E_i)$. 
Now, let $J:=\{i: z_i\neq 0\}$. Of course, $J\neq \emptyset$ otherwise $z=0$, again.
Clearly, $z=\sum_{j\in J}z_j$. 
Now, for each $j\in J$, it is $\mu(A\cap E_j)>0$ (otherwise $z_j=0$), and  define 
$s^j:=s \uno_{E_j}$. 
Moreover, define $A':=\bigcup_{j\in J}(A\cap E_j)$, and finally let us set
$$s_0:=\sum_{j\in J} (s^j-\dfrac{z_j}{\mu(A\cap E_j)})\uno_{E_j}.$$
Since $z_j\in (-\erre^n_+)\setminus \{0\}$ for each $j$, then the allocation $s_0$ satisfies 
$s_0(a)\succ_a s(a)\succ_af(a)$ $\mu$-a.e. in $A'$, and moreover 
$$\int_{A'\cap E_j}^* s_0 d\mu=\int_{A'\cap E_j}^* s^jd\mu-z_j=e^j\mu(A\cap E_j)$$
holds true, for all $j\in J$. 
Moreover, if $i\notin J$,  by definition it is
$A'\cap E_i=\emptyset$, and so $$\int_{A'\cap E_i}^* s_0 d\mu=0=\int_{A'\cap E_i}^* e d\mu.$$
So it is proved that the coalition $A'$ strongly improves $f$ by the allocation $s_0$.
 But this is impossible, since $f\in \LCEs$.\acr
In conclusion $I_f \cap (-\mathbb{R}_+^n) = \{ 0\}$ and hence
 $\overline{I}_f \cap (-\mathbb{R}_+^n)^o = \emptyset$.
Since both sets are convex, and the second one has non-empty interior, we can  apply the 
Strong Separation Theorem, and
determine some $p \in \erre ^n$ $p\not= 0$ such that $p \cdot x \geq 0$ for all $x \in \overline{I}_f$.\acr
It only remains to prove that
$p\in \mathbb{R}_+^n.$
Indeed, we have that $(\mathbb{R}_+^n)^o \subset I_f;$ in fact if $x\in (\mathbb{R}_+^n)^o$, then 
the allocation $\displaystyle{\psi =\dfrac{x}{\mu (X)} + f}$ is in $S^*_{\Gamma_f}$ and
\begin{eqnarray*}
\mintX \psi d\mu- \mintX e d\mu= \mintX fd\mu+ x - \mintX e d\mu= x
\end{eqnarray*}
since $f$ is feasible. Then  
$p\cdot x \geq 0$ for every $x\in (\mathbb{R}_+^n)^o,$ whence necessarily
$p\in \mathbb{R}_+^n$. \end{proof}

\noindent

Under assumption ({\bf \ref{gamma}}) it follows that
\begin{proposizione}\label{strassen} 
Let $p$ be as in Lemma  { \rm \ref{separazione}} and 
$\gamma$ be the map defined by:
$\gamma(a) = \inf \{ p\cdot y :  y \in (\Gamma_f(a) - e(a))\cup \{0\} \quad \}.$
Then $\gamma$ is identically null.
\end{proposizione}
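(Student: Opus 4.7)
The plan is to exploit the fact that both $\Gamma_f$ and $e$ are constant on each coalition $E_i$ (Assumption \ref{gamma}), which implies that $\gamma$ is itself constant on each $E_i$. Writing $\gamma_i$ for its value on $E_i$, one has
$$\gamma_i \;=\; \min\Bigl\{0,\ \inf_{c\in C_i} p\cdot(c-e_i)\Bigr\},$$
since $0$ is always explicitly included in the set over which the infimum is taken. In particular, $\gamma_i\leq 0$ for every $i$, and it remains to verify the reverse inequality.

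The lower bound is where the hypothesis $f\in\LCEs$ is needed, via the price $p$ produced by Lemma \ref{separazione}. Fix $i$ with $\mu(E_i)>0$ and any $c\in C_i$. Theorem \ref{1.3.1.cond-c} identifies $I_i=[0,\mu(E_i)](C_i-e_i)$ and shows $I_f=\sommar I_i$. Choosing the parameter $\mu(E_i)$ in the segment, one sees that the vector $\mu(E_i)(c-e_i)$ lies in $I_i\subset I_f$. Applying the conclusion of Lemma \ref{separazione} to this element yields
$$p\cdot\bigl(\mu(E_i)(c-e_i)\bigr)\;\geq\;0,$$
and dividing by the strictly positive scalar $\mu(E_i)$ gives $p\cdot(c-e_i)\geq 0$. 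Taking the infimum over $c\in C_i$ shows $\inf_{c\in C_i}p\cdot(c-e_i)\geq 0$, so combined with the trivial inequality $\gamma_i\leq 0$ one obtains $\gamma_i=0$.

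For those (possibly none) indices $i$ with $\mu(E_i)=0$, the set $E_i$ is a $\mu$-null set by the $\sigma$-ideal assumption, so the identity $\gamma=0$ on $E_i$ holds $\mu$-a.e. trivially; alternatively, monotonicity of $\succ_i$ forces $e_i+v\in C_i$ for every $v\in(\mathbb{R}^n_+)^\circ$, which combined with $p\in\mathbb{R}^n_+$ (again from Lemma \ref{separazione}) gives $\inf_{c\in C_i}p\cdot(c-e_i)\leq 0$ together with a routine closure argument establishing equality. I expect the main (minor) obstacle to be checking that I have the correct form of the infimum — in particular that adjoining $\{0\}$ does not spoil anything — but this is settled at once by observing that $\gamma_i\leq 0$ is automatic and the nontrivial content is exactly the $\geq 0$ inequality supplied by the separation argument. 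Once the decomposition of $I_f$ from Theorem \ref{1.3.1.cond-c} is invoked, the proof reduces to a single line per coalition.
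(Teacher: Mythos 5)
Your proof is correct and follows essentially the same route as the paper's: both reduce to showing $p\cdot(c-e_i)\ge 0$ for every $c\in C_i$ by recognizing $\mu(E_i)(c-e_i)$ as an element of $I_f$ and invoking the separation inequality of Lemma \ref{separazione}, the $\min\{0,\cdot\}$ structure making the inequality $\gamma_i\le 0$ automatic. The only cosmetic differences are that you obtain $\mu(E_i)(c-e_i)\in I_f$ from the decomposition $I_f=\sommar I_i$ of Theorem \ref{1.3.1.cond-c} where the paper constructs the selection $s=x_i\uno_{E_i}+f\uno_{X\setminus E_i}$ explicitly, and that arguing directly for every $c\in C_i$ (rather than for a minimizer) sidesteps the paper's inessential appeal to closedness of $C_i$ to attain the infimum.
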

\begin{proof}
 Observe that, since $\Gamma_f$ and $e$ are simple, $ (\Gamma_f - e)\cup \{0\} $ is graph-measurable, that is
\begin{eqnarray*}
\{ (a,x)\in X \times \erre^n_+: x\in  (\Gamma_f(a) - e(a))\cup \{0\}  \} \in \mathcal{A} \otimes {\mathcal B}_{\erre^n}.
\end{eqnarray*}
Since $f$ is a simple function then, for every 
$a \in E_i$, 
$ \gamma (a) = 0 \wedge [ \inf_{y \in C_i} p \cdot y - p \cdot e_i] := \gamma_i.$\acr
Let $I_0$ be the set $I_0 = \{ i \leq r : \gamma_i < 0\}$. It will be proven that $I_0$ is empty.
Suppose by contradiction that there exists $i \in I_0$, namely $\gamma_i = \inf_{y \in C_i} p \cdot y - p \cdot e_i < 0$.
Since $C_i$ is closed there exists $x_i \in C_i$ such that $\gamma_i = p \cdot x_i - p \cdot e_i$.
Let $s$ be the function defined by: $s= x_i \uno_{E_i} + f \uno_{X \setminus E_i}$.
Observe that
$$ \int_{E_i} s d\mu- \int_{E_i} e d\mu =  (x_i  - e_i)\mu(E_i) \in I_f.$$
So, by Lemma \ref{separazione},
$p \cdot (x_i - e_i) \mu(E_i) = \mu(E_i) p \cdot (x_i - e_i) \geq 0$, while, by hypothesis,
$\mu(E_i) > 0$ and $p (x_i - e_i) = \gamma_i < 0$: contradiction.
\end{proof}

\begin{theorem}\label{finale}
Under Assumptions  
\rm ({\bf E1}) -- ({\bf E4}) and  {\rm \ref{gamma})}  if $f \in \LCEs$ is a
 {\em simple} allocation, then $f \in \WEs$.
\end{theorem}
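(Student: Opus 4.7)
The plan is to combine the price $p\in\mathbb{R}_+^n\setminus\{0\}$ produced by Lemma~\ref{separazione} with the identity $\gamma\equiv 0$ from Proposition~\ref{strassen}, which yields $p\cdot y\geq p\cdot e_i$ for every $y\in C_i$ and every $i\leq r$, and then verify the two defining conditions of a Walrasian equilibrium for $(f,p)$. Write $f=\sommar a_i\uno_{E_i}$ so that, under Assumption~\ref{gamma}, the upper contour set at $f(a)$ is $C_i$ whenever $a\in E_i$.

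For budget membership, reflexivity of $\succeq_i$ places $a_i\in C_i$ for each $i$, hence $p\cdot a_i\geq p\cdot e_i$. Feasibility $\mu_f(X)=\mu_e(X)$, together with Proposition~\ref{corollario}.b applied to the simple functions $f$ and $e$ (both constant on every $E_i$), produces $\sommar(p\cdot a_i-p\cdot e_i)\mu(E_i)=p\cdot\mu_f(X)-p\cdot\mu_e(X)=0$; every summand is nonnegative, so each vanishes and $p\cdot a_i=p\cdot e_i$ for every index with $\mu(E_i)>0$. Thus $f(a)\in B_p(a)$ with budget equality $\mu$-a.e.

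For maximality, namely $x\succ_a f(a)\Rightarrow p\cdot x>p\cdot e(a)$, the same inequality coming from Proposition~\ref{strassen} gives the weak form $p\cdot x\geq p\cdot e_i$ whenever $a\in E_i$ and $x\succ_i a_i$ (so that $x\in C_i$). The main obstacle is ruling out the borderline equality $p\cdot x=p\cdot e_i$, a genuinely delicate step because only the upper contour sets are assumed closed and $p$ is merely nonnegative, so the usual local-nonsatiation argument is not directly available. My strategy is a contradiction argument: assume $p\cdot x=p\cdot e_i$ with $x\succ_i a_i$, and manufacture a strong improvement of $f$ on the coalition $E_i$ using the semiconvexity of $\mu$. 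Invoke Lemma~\ref{CM2.1} to pick $A_t\subset E_i$ with $\mu(A_t)=t\mu(E_i)$ and $\mu(E_i\setminus A_t)=(1-t)\mu(E_i)$, set $v:=(e_i-tx)/(1-t)$, and define $g:=x\uno_{A_t}+v\uno_{E_i\setminus A_t}+f\uno_{X\setminus E_i}$. A componentwise application of Proposition~\ref{int-simple} gives $\mu_g(E_i)=[tx+(1-t)v]\mu(E_i)=e_i\mu(E_i)=\mu_e(E_i)$, while $g=f$ off $E_i$ preserves every other per-coalition balance.

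It only remains to choose $t>0$ small enough so that $v\in\mathbb{R}_+^n$ (automatic since $v\to e_i\gg 0$ as $t\to 0^+$) and $v\succ_i a_i$. Note first that $f\in LC(\mathcal{E})$ forbids $e_i\succ_i a_i$, for otherwise the allocation $e$ itself would already strongly improve $f$ on $E_i$; accordingly, $v$ cannot simply be set equal to $e_i$, and here one replaces $v$ by a vanishing upward perturbation $v+\eta\uno$ which by strong monotonicity lands in the strict upper contour set of $a_i$, absorbing the resulting bookkeeping mismatch by a further semiconvex sub-split of $A_t$ (adjusting the chain of subsets through Proposition~\ref{zero}). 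The resulting $g$ strongly improves $f$ via the coalition $E_i$, contradicting $f\in LC(\mathcal{E})$ and forcing $p\cdot x>p\cdot e_i$. Combined with the budget identity of the second paragraph, this shows $(f,p)$ satisfies both conditions of a Walrasian equilibrium, hence $f\in W(\mathcal{E})$.
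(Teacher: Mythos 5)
Your overall architecture coincides with the paper's: take the price $p$ from Lemma \ref{separazione}, use Proposition \ref{strassen} to obtain $p\cdot y\ge p\cdot e_i$ for all $y\in C_i$, and then check the two conditions of a Walras equilibrium. Your budget step is correct and in fact more direct than the paper's: the paper derives $p\cdot f=p\cdot e$ $\mu$-a.e.\ from the chain of integral inequalities (\ref{sommafinale}) together with Corollary \ref{ugualeqo}, whereas you exploit simplicity of $f$ to reduce everything to the finite identity $\sum_i(p\cdot a_i-p\cdot e_i)\mu(E_i)=0$ with nonnegative summands. That part is fine.

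The maximality step, however, has a genuine gap. To rule out $p\cdot x=p\cdot e_i$ for some $x\succ_i a_i$ you build a blocking allocation equal to $x$ on $A_t$ and to (a perturbation of) $v=(e_i-tx)/(1-t)$ on $E_i\setminus A_t$, and you need $v+\eta\uno\succ_i a_i$. You justify this ``by strong monotonicity'', but monotonicity only yields $v+\eta\uno\succ_i v$; it says nothing about the relation between $v+\eta\uno$ and $a_i$, because $\succ_i$ is not complete and no transitive chain from $v$ down to $a_i$ is available. Worse, as $t\to0^+$ the bundle $v$ tends to $e_i$, and you yourself observe that $e_i\succ_i a_i$ is excluded by $f\in\LCEs$; so the bundles you are perturbing are precisely the ones that cannot be assumed to dominate $a_i$. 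The construction therefore does not produce a strong improvement, and the additional ``semiconvex sub-split'' cannot repair it, since the obstruction is order-theoretic, not measure-theoretic. The paper closes this step differently (citing Hildenbrand): Proposition \ref{strassen} gives the support inequality $p\cdot y\ge p\cdot e_i$ for \emph{every} $y\succ_i a_i$, and if $p\cdot x=p\cdot e_i>0$ (positivity because $e_i\gg0$ and $p\ne0$) one perturbs $x$ itself downward along a coordinate with positive price, remaining in the strictly preferred set by continuity and thereby violating the support inequality; no improving coalition is ever constructed. You should replace your last paragraph by this perturbation argument.
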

\begin{proof}
Thanks to Theorem \ref{WESCES} it is $\WEs\subset \CEs$.
To prove the converse inclusion fix
 $f \in \LCEs$. Consider $ (\Gamma_f - e) \cup \{0\}$ 
and  let $I_f$ be its Aumann integral obtained via
Choquet integrable selections. By  Lemma \ref{separazione},
it is known that
$I_f \cap (-\mathbb{R}_+^n) = \{0\}$ 
and $p \in \mathbb{R}_+^n$ exists such that
\begin{eqnarray}\label{px}
px \geq 0 \quad \text{ for every } x \in I_f.
\end{eqnarray}
By Proposition \ref{strassen}
a.e. in 
$X, p\cdot e(x)\leq p\cdot y$, for every $y\succeq_{x} f(x)$.
So, by continuity of the preorder,
a.e. in $X, p \cdot e(x) \leq p \cdot f(x)$.
It will be proven now that the previous inequality is in fact
an equality.  Let $b_k = p \cdot e(x)$, for every $x \in
E_k$; by Corollary \ref{corollario}
\begin{eqnarray*}
\mintX p \cdot e(x) d\mu= p \cdot \mintX
e(x) d\mu= \sum_{k=1}^r b_k \mu(E_k).
\end{eqnarray*}
Let $A \in \mathcal{A}$ be fixed. Then
\begin{eqnarray}\label{sommafinale}
\nonumber 
 0 &\leq& \mintA  \hskip-.2cm  p( f - e) d \mu =
\sum_{k=1}^r \int_{A\cap E_k}^*  \hskip-.6cm p(f - e) d\mu= 
 \sum_{k=1}^r \int_0^{\infty} \hskip-.4cm \mu
(\{x \in A \cap E_k: p  f(x) > b_k + t \}) dt = \\
\nonumber &=& 
\sum_{k=1}^r \int_{b_k}^{\infty}\mu (\{x \in A \cap E_k: p f(x) > u\}) du = \\
\nonumber &=&
\sum_{k=1}^r \left( \int_0^{\infty} \mkern-20mu \mu (\{x \in A \cap E_k: p  f(x) > u\}) du  -
\int_0^{b_k} \mkern-20mu \mu (\{x \in A \cap E_k: p  f(x) > u\}) du \right) \leq
\\
&\leq& 
\sum_{k=1}^r \left(\int_{A \cap E_k}^* p  f d\mu- \int_0^{b_k}  \mu (\{x \in A \cap E_k: p  e(x) > u\}) du \right) = \\
\nonumber &=& 
\sum_{k=1}^r \left( \int_{A \cap E_k}^* p \cdot f d\mu- 
\int_{A \cap E_k}^*
p\cdot e d\mu \right) 
= \mintA p \cdot f d\mu- \mintA p \cdot e d\mu= \\ &=& \nonumber
p \left(\mintA f d\mu- \mintA e
d\mu\right).
\end{eqnarray}
For $A = X$, since $f$ is feasible and $p\cdot f -p\cdot e \geq 0$ $\mu$-a.e., it follows 
\[ 0 
\leq \mintX  p( f - e) d \mu =
p \left(\mintX f d\mu- \mintX e d\mu\right) = 0;\] 
 this in
turn implies that
$\mu_{pf} = \mu_{pe}$ on $\mathcal{A}$.
Applying Corollary \ref{ugualeqo}
we  get $p\cdot f = p\cdot e$
$\mu$-a.e. in $X$.\acr
The remaining part of the proof is exactly the same as that of
\cite[Theorem 2.1.1, pag 133 ff]{Hilde}  
since preferences are assumed to be monotone and continuous.  
\end{proof} 
\medskip

\subsection{Existence of Equilibria}
Assume now that the preferences have the following structure:
\begin{itemize}
\item[$\bullet$] there exist $r$ subsets $J_1, \ldots, J_r \subseteq \{1, 2, \ldots, n\}$ such that:
\begin{itemize}
\item[i)] for every $x, y \in C$, $x \succ_a y \Longleftrightarrow x_j > y_j, j \in J_k$ when $a \in E_k$;
\item[ii)] $\bigcap_{i=1}^r J_i \neq \emptyset$.
\end{itemize}
\end{itemize}
This means that within each coalition $E_k$ only the items of the $k$-th list $J_k$ are considered, 
in order to decide whether a bundle is preferred to another. Observe that such assumption does not 
fulfil monotonicity, in the sense of ({\bf A.3.b}), but it satisfies the more demanding form
\begin{itemize}
\item for every $x \in \mathbb{R}_+^n$,  $z \in (\mathbb{R}_+^n)^0$, then $x + z \succ_a x$ for every $a \in X$.
\end{itemize}
However Lemma \ref{separazione} remains true: one has only to note that $I_f \cap (-\mathbb{R}_+^n)^0 = \emptyset$ 
with the same proof.

\begin{proposizione}\label{cor-finale}
Under Assumptions  \rm ({\bf E1}) -- ({\bf E4}), $e  \in \WEs$.
\end{proposizione}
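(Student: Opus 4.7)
The plan is to construct an equilibrium price $p$ for the allocation $e$ directly, exploiting assumption ii). Since $e$ is automatically feasible (the equality $\mu_e(X)=\mu_e(X)$ is trivial), it will suffice to produce some $p\in \mathbb{R}_+^n\setminus\{0\}$ such that $e(a)$ is $\succ_a$-maximal in the budget set $B_p(a)$ for every $a\in X$.

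To build $p$, I would invoke ii) and pick any index $j_0\in \bigcap_{k=1}^r J_k$. Let $p$ denote the $j_0$-th canonical unit vector of $\erre^n$, so that $p\in \mathbb{R}_+^n\setminus\{0\}$ and $p\cdot x=x_{j_0}$ for every $x\in \erre^n$. For each $a\in E_k$ we have $p\cdot e(a)=e_{k,j_0}$, hence $e(a)\in B_p(a)$. Assume now that $x\in \mathbb{R}_+^n$ satisfies $x\succ_a e(a)$: by i), $x_j>e_{k,j}$ for every $j\in J_k$; since $j_0\in J_k$ we obtain in particular $x_{j_0}>e_{k,j_0}$, i.e. $p\cdot x> p\cdot e(a)$. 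Hence $x\notin B_p(a)$, which gives the $\succ_a$-maximality of $e(a)$, and therefore $(e,p)$ is a Walras equilibrium.

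The main subtle point is that the argument deliberately bypasses Theorem \ref{finale}, because the standard monotonicity hypothesis is replaced here by the weaker interior version, and one would have to revisit the chain running through Lemma \ref{separazione}, Proposition \ref{strassen} and Theorem \ref{finale} to make sure it survives in this setting. Assumption ii) makes such a detour unnecessary: the existence of a single coordinate $j_0$ relevant for every coalition yields one linear price that supports $e_k$ simultaneously in every oligopoly $E_k$, and this common coordinate is precisely what allows a uniform proof of maximality. An alternative route would first show $e\in \LCEs$ by applying Lemma \ref{eq-zero} to the component $g_{j_0}$ of any would-be strongly-improving allocation $g$ against the constant $e_{j_0}$ on $S\cap E_k$ (obtaining $\mu_{g_{j_0}}(S\cap E_k)>\mu_{e_{j_0}}(S\cap E_k)$, in contradiction with strong improvement), and then invoke a suitable extension of Theorem \ref{finale}; but the direct construction is markedly shorter.
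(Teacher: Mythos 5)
Your proof is correct, but it takes a genuinely different route from the paper's. The paper first shows $e\in \CEs$ by contradiction: if a pair $(f,S)$ improved $e$, then for the common index $j_0\in\bigcap_k J_k$ one would have $f_{j_0}>e_{j_0}$ on $S$, and since $e_{j_0}$ is constant on each $E_i$ this yields $\mu_{f_{j_0}}(S)>\mu_{e_{j_0}}(S)$ (this is exactly Lemma \ref{eq-zero}, which is also the engine of your ``alternative route''), contradicting $\mu_f(S)=\mu_e(S)$; it then invokes Theorem \ref{finale} (via $\CEs\subset\LCEs$ and the fact that $e$ is simple) to upgrade membership in the core to membership in $\WEs$. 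You instead exhibit the supporting price explicitly as the $j_0$-th coordinate functional and verify the definition of Walras equilibrium directly: $e(a)\in B_p(a)$ trivially, and $x\succ_a e(a)$ forces $x_{j_0}>e_{k,j_0}$, i.e.\ $p\cdot x>p\cdot e(a)$. Your approach buys self-containedness: it bypasses the separation machinery of Lemma \ref{separazione}, Proposition \ref{strassen} and Theorem \ref{finale}, which is a real advantage here since, as the paper itself notes, the special preference structure fails the monotonicity hypothesis ({\bf E4}.b) and only Lemma \ref{separazione} is explicitly re-checked in this setting; the direct construction sidesteps that entire verification. What the paper's route buys in exchange is uniformity (the same Core--Walras template is reused) and, as a by-product, the intermediate fact $e\in\CEs$, which your argument obtains only a posteriori via Theorem \ref{WESCES}. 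Both arguments rest on the same key observation, namely that assumption ii) provides one coordinate relevant to every coalition simultaneously.
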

\begin{proof}
It is enough to prove that $e  \in \CEs$ and then apply Theorem \ref{finale}. Assume by contradiction that
$e  \not\in \CEs$; then there exists a pair $(f,S)$ that improves $e$, namely
\begin{itemize}
\item[\ref{cor-finale}.a)] $f \succ_a e$, when $a \in S$;
\item[\ref{cor-finale}.b)] $\iS f d\mu= \iS e d\mu$.
\end{itemize}
From \ref{cor-finale}.a), if $k \in \bigcap_{i=1}^r J_i$, we have for the $k$-th entries of $f$ and $e$,
$f_k(a) > e_k(a), a \in S$. Hence by Corollary \ref{ugualeqo}, there holds
$$ \iS f_k d\mu> \iS e_k d\mu$$
that contradicts \ref{cor-finale}.b). 
\end{proof}

It will be shown now that, in some cases, allocations that are constant in the sets $E_i$ turn out to be important when searching 
elements of the core $\CEs$.\acr

A technical result will be established first, concerning the Choquet integral in this context. 

\begin{lemma}\label{cmaggiore}
Let $\mu$ be  submodular, $f$ be any scalar integrable function on $X$ and $c$ any positive real constant. If there exists a set 
$S\in \mathcal{A}$, with positive measure, such that $\mintS  cd\mu\geq \mintS  f d\mu$, then either
 $f\equiv c$ $\mu$-a.e. in $S$ or there exists a measurable subset $S'\subset S$, with $\mu(S')>0$, such that 
$f(s)<c$ for all $s\in S'$.
\end{lemma}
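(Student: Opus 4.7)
The plan is to prove the contrapositive: assume the second alternative fails, i.e.\ no measurable $S'\subset S$ of positive $\mu$-measure satisfies $f<c$ pointwise on $S'$, and then derive that $f\equiv c$ $\mu$-a.e.\ on $S$. The measurable set $N:=\{s\in S: f(s)<c\}$ would itself serve as $S'$ if it had positive $\mu$-measure, so necessarily $\mu(N)=0$. Since submodularity implies subadditivity, every measurable set of $\mu$-measure zero is $\mu$-null by \cite[Proposition 5.2]{MuSu}; hence $f\geq c$ holds $\mu$-a.e.\ on $S$.

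Monotonicity (\ref{De}.iii) then gives $\mintS f\,d\mu\geq \mintS c\,d\mu=c\mu(S)$, and combined with the hypothesis of the lemma this yields the equality $\mintS f\,d\mu = c\mu(S)$. Expanding the left side via the layer-cake formula,
\[
\mintS f\,d\mu=\int_0^\infty\mu(\{s\in S: f(s)>t\})\,dt,
\]
and noting that for $t<c$ the set $\{s\in S: f(s)\leq t\}\subset N$ is null, so $\mu(\{s\in S: f(s)>t\})=\mu(S)$ for such $t$, the portion of the integral over $[0,c]$ already contributes $c\mu(S)$. Consequently
\[
\int_c^\infty \mu(\{s\in S: f(s)>t\})\,dt=0.
\]

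The integrand is non-negative and, by monotonicity of $\mu$, non-increasing in $t$; hence its vanishing on $[c,\infty)$ forces $\mu(\{s\in S:f(s)>t\})=0$ for every $t>c$. Writing $\{s\in S: f(s)>c\}=\bigcup_k \{s\in S: f(s)>c+1/k\}$ and invoking the standing $\sigma$-ideal hypothesis on $\mu$-zero sets, this countable union is itself $\mu$-null. Thus $f\leq c$ $\mu$-a.e.\ on $S$, and together with the earlier $f\geq c$ $\mu$-a.e.\ on $S$ we conclude $f\equiv c$ $\mu$-a.e.\ on $S$. I do not expect a serious obstacle here: the argument is essentially careful bookkeeping with the layer-cake representation combined with the monotonicity and null-set properties already established. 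The one mildly delicate step is the passage from vanishing on each $\{f>c+1/k\}$ to vanishing on $\{f>c\}$, which is precisely where the $\sigma$-ideal assumption on $\mu$-zero sets is indispensable.
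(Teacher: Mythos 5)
Your proof is correct, but it takes a genuinely different route from the paper's. The paper argues by contradiction: assuming $\mu(\{f<c\}\cap S)=0$ and that $f\not\equiv c$, it reduces via $(\ref{De}.iii)$ and $(\ref{De}.iv)$ to $\int_{H_2}^*(f-c)\,d\mu=0$ on the positive-measure set $H_2=\{f>c\}\cap S$, using the subadditivity of the indefinite integral as a set function, and then contradicts Lemma \ref{eq-zero} (whose proof in turn invokes the $\sigma$-ideal property and, through Propositions \ref{intadditivo} and \ref{corollario}, the decomposition $(E_i)_{i\leq r}$ of assumption $(\mathbf{H.0})$). You instead work directly on the layer-cake representation: the forced equality $\int_S^* f\,d\mu=c\mu(S)$ together with $\mu(\{s\in S:f(s)>t\})=\mu(S)$ for $t<c$ (valid because the subset $\{f\le t\}\cap S$ of the null set $N$ is null, the null sets forming an ideal) yields $\int_c^\infty\mu(\{s\in S:f(s)>t\})\,dt=0$; monotonicity of the distribution function kills the integrand pointwise for $t>c$, and the $\sigma$-ideal hypothesis upgrades this to $\mu(\{f>c\}\cap S)=0$. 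Your argument is more elementary and slightly more general: it bypasses Lemma \ref{eq-zero} and never uses $(\mathbf{H.0})$, and of submodularity it only uses the consequence that $\mu$-zero sets are null (i.e.\ subadditivity). Both proofs do rely on the $\sigma$-ideal assumption on $\mu$-zero sets, which is not in the lemma's literal hypotheses but is part of the standing assumption $(\mathbf{E1})$; you correctly flag this as the one indispensable extra ingredient.
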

\begin{proof} 
Assume directly that $f$ and $c$ are not $\mu$-a.e. equal in $S$, and  define:
$$H:=\{s\in S:f(s)\neq c\},\quad  H_1:=\{s\in H: f(s)<c\}, \ H_2:=\{s\in H: f(s)>c\}.$$
Then $\mu(H)>0$. By contradiction,  suppose that $\mu(H_1)=0$, then $f\geq c$ $\mu$-a.e. in $S$. 
By (\ref{De}.iii) it follows that $\mintS  fd\mu \geq \int_S^* c d\mu$ and so the two integrals coincide.
\acr
Now, consider the non-negative map $f-c$ in $S\setminus H_1$:  thanks to (\ref{De}.iv) it is
$$0=\mintS  f d\mu-c\mu(S)=\int_{S\setminus H_1}^* f d\mu -\int_{S\setminus H_1}^* c d\mu=
\int_{S\setminus H_1}^* (f-c)d\mu$$
But also
\begin{eqnarray*}
0&=&\int_{S\setminus H_1}^* (f-c)d\mu\leq \int_{H_2}^* (f-c) d\mu+
\int_{S\setminus H}^* (f-c)d\mu=\int_{H_2}^* (f-c) d\mu\leq\\
&\leq& \int_{S\setminus H_1}^* (f-c)d\mu.
\end{eqnarray*}
So, 
$\int_{H_2}^* (f-c) d\mu=0$
and $\mu(H_2)$ is positive, otherwise $\mu(H)=0$. Then, from  Lemma \ref{eq-zero}, it would follow 
$\int_{H_2}^*(f-c) d\mu>0$: contradiction. 
\end{proof}

Now,  assume that the preference preorders $\succ_i$ are of a special type, i.e.  suppose that they are
represented by $r$ utility functions $u_i$, such that 
\begin{itemize}
\item[\bf u1)] each $u_i:\erre^n_+\to \erre$ is positive, increasing, continuous, and concave;
\item[\bf u2)]  each $u_i$ is positively homogeneous in the one-dimensional subspace generated by the vector 
$e^i:=e \uno_{E_i}$.
\end{itemize}

Observe that a Jensen-type theorem holds true also
in this case, e.g.
$$\mintX u(f_1,...,f_k)d\mu\leq u(\mintX f_1 d\mu, ...,\mintX f_k d\mu)$$
as soon as the integrals exist,
 $u$ is concave, increasing, continuous, and $\mu(X)=1$.

\begin{rem}\rm
Observe that in the paper \cite{WANG} the dual inequality is stated, for a {\em convex} function $u$, but under the condition of 
comonotonicity of the mappings $f_1, \ldots,f_n$.
\end{rem}

Following \cite{WANG},  the following notion of concave function in the positive orthant of $\erre^n$ is adopted:
\begin{definition}\label{concavendim}\rm
An increasing map $u:\erre^n_+\to \erre^+_0$ is {\em concave} if, for every $n$-tuple $(t_1,...,t_n)\in \erre^n_+$ there exist $n$  
non-negative parameters $a_1,...,a_n$ and a real constant $c$, such that
\begin{itemize}
\item[\bf(\ref{concavendim}.1)] \hskip.5cm
$a_1t_1+...+a_nt_n+c=u(t_1,...,t_n) $\quad
and
\item[\bf(\ref{concavendim}.2)] \hskip.5cm
$a_1\tau_1+...+a_n\tau_n+c\geq u(\tau_1,...,\tau_n)$\quad
for all $(\tau_1,...,\tau_n)\in \erre^n_+$.
\end{itemize}
\end{definition}
\medskip

Now the following result will be proven:
\begin{theorem}\label{jensen2}
If $u$ is an increasing and concave function according to  the previous definition, and $f_1,...,f_n$ are scalarly integrable allocations such that $u(f_1,...,f_n)$ is integrable too, then, provided that $\mu(X)=1$:
$$\mintX u(f_1,...,f_k)d\mu\leq u(\mintX f_1 d\mu, ...,\mintX f_k d\mu).$$

\end{theorem}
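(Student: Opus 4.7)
The plan is to linearize $u$ at the vector of Choquet means $m:=(m_1,\ldots,m_n)$, with $m_i:=\mintX f_i\, d\mu$, by invoking the affine-minorant structure of Definition \ref{concavendim}, and then to push the resulting pointwise inequality through the Choquet integral using the properties collected in Proposition \ref{De}. No approximation by simple functions is needed, in contrast with Theorem \ref{Jensen}: a single supporting affine functional at the point $m$ is enough.

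First I would apply Definition \ref{concavendim} at $(t_1,\ldots,t_n)=(m_1,\ldots,m_n)\in\erre_+^n$ to obtain non-negative scalars $a_1,\ldots,a_n$ and a constant $c\in\erre$ such that
\[
\sum_{i=1}^n a_i m_i+c=u(m_1,\ldots,m_n),\qquad \sum_{i=1}^n a_i \tau_i+c\geq u(\tau_1,\ldots,\tau_n)\quad \forall\, (\tau_i)\in\erre_+^n.
\]
Specializing the second relation to $\tau_i=f_i(x)$ yields the pointwise bound $u(f_1(x),\ldots,f_n(x))\leq \sum_{i=1}^n a_i f_i(x)+c$ valid on all of $X$.

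Next I would integrate this pointwise inequality against $\mu$. Monotonicity of the (asymmetric) Choquet integral bounds the left side by $\mintX\bigl(\sum_i a_i f_i+c\bigr) d\mu$; translation invariance (\ref{De}.iv) peels off the constant as $c\mu(X)=c$, using the normalization $\mu(X)=1$; subadditivity of the Choquet integral, which follows from submodularity of $\mu$ via (\ref{De}.v), applied inductively bounds $\mintX \sum_i a_i f_i\, d\mu$ by $\sum_i \mintX a_i f_i\, d\mu$; and positive homogeneity (\ref{De}.ii), legitimate since each $a_i\geq 0$, turns each summand into $a_i m_i$. Chaining these inequalities gives
\[
\mintX u(f_1,\ldots,f_n)\, d\mu \leq \sum_{i=1}^n a_i m_i+c=u(m_1,\ldots,m_n),
\]
which is the desired inequality.

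The main technicality to watch is that the constant $c$ from Definition \ref{concavendim} need not be non-negative, so the intermediate integrand $\sum_i a_i f_i+c$ may be negative on a set of positive measure; consequently the argument has to rely on the asymmetric Choquet integral introduced in the paper, and one should check that monotonicity, subadditivity, positive homogeneity and (\ref{De}.iv) genuinely hold in that generality for arbitrary real $c$ and non-negative $f_i$ — all of which are exactly what Proposition \ref{De} records. Note also that no comonotonicity hypothesis is imposed on $(f_1,\ldots,f_n)$, which is the feature distinguishing this concave version from the convex one in \cite{WANG}.
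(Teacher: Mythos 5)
Your proposal is correct and follows essentially the same route as the paper's own proof: linearize $u$ via the supporting affine functional of Definition \ref{concavendim} at the vector of Choquet means, then combine monotonicity, translation invariance (\ref{De}.iv), positive homogeneity (\ref{De}.ii), and the subadditivity coming from submodularity (\ref{De}.v); the paper merely writes the same chain of inequalities starting from $u(t_1,\ldots,t_n)$ and working downward. Your explicit remark that a negative constant $c$ forces the use of the asymmetric integral is a point the paper passes over with only the phrase ``thanks to the properties of the asymmetric integral,'' so it is a welcome clarification rather than a deviation.
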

\begin{proof}
 Set $t_i:=\mintX f_id\mu$ for all $i$, and let $a_i$, $c$ be the constants given in the previous definition.
So, thanks to the properties of the asymmetric integral it is:
\begin{eqnarray*}
u(\mintX f_1 d\mu, ...,\mintX f_k d\mu) &=&u(t_1,...,t_n)=a_1t_1+...+a_nt_n+c=  \\ &=&
 a_1\mintX f_1d\mu+...+a_n\mintX f_nd\mu+\mintX cd\mu\geq \\
&\geq& \mintX (a_1f_1+...+a_nf_n)d\mu +\mintX cd\mu= \\
&=& \mintX (a_1f_1+...+a_nf_n+c)d\mu\geq \mintX u(f_1,...,f_n)d\mu,
\end{eqnarray*}
the last inequality following from (\ref{concavendim}.2). 
\end{proof}

Before stating the next results,  some notations will be introduced. Given any integrable allocation
 $f$, let $\overline{f}$ be the following map:
$$\overline{f}=\sum_{i=1}^r\dfrac{\mintEi   fd\mu}{\mu(E_i)} 1_{E_i}.$$ 
Clearly, $\overline{f}$ is constant in each set $E_i$, and we call it the {\em average function} of $f$. 
Of course, $f$ is feasible if and only if $\overline{f}$ is.
The next result states that, when preferences are of the type described above, given a feasible allocation 
$f\in \LCEs$, then also its average function $\overline{f}$ belongs to $\LCEs$.
This fact, thanks to Theorems \ref{WESCES} and \ref{finale}, allows  to deduce  that $f\in \CEs$  
implies $\overline{f}\in \CEs\cap \WEs$.

\begin{theorem}\label{ultimo}
In the situation described
above, assume that $f$ is any feasible allocation belonging to $\CEs$. Then $\overline{f}\in \CEs$.
\end{theorem}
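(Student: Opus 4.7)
The strategy is to argue by contradiction: assume $\overline{f}\notin \CEs$, so there is a coalition $S$ and an improving allocation $g$ with $g(a)\succ_a\overline{f}(a)$ $\mu$-a.e.\ on $S$ and $\mu_g(S)=\mu_e(S)$, and manufacture from these data a coalition that improves $f$, contradicting $f\in \CEs$. Writing $J:=\{i\leq r:\mu(S\cap E_i)>0\}$, the first step is to pass from $g$ to its conditional averages $h^i:=\mu(S\cap E_i)^{-1}\mintSEi   g\,d\mu$ for $i\in J$. Since $u_i(g(a))>u_i(\overline{f}^i)$ a.e.\ on $S\cap E_i$ and $u_i(\overline{f}^i)$ is constant on $E_i$, Lemma \ref{eq-zero} gives $\mintSEi   u_i(g)\,d\mu>u_i(\overline{f}^i)\,\mu(S\cap E_i)$; combining this with Jensen's inequality (Theorem \ref{jensen2}) applied to the restricted, normalised measure on $S\cap E_i$ yields $u_i(h^i)>u_i(\overline{f}^i)$ for every $i\in J$.

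The second step connects $h^i$ back to $f$ itself. A further application of Theorem \ref{jensen2}, now to $f$ on $E_i$, gives $u_i(\overline{f}^i)\geq \mu(E_i)^{-1}\mintEi   u_i(f)\,d\mu$, so chaining the two estimates one obtains
$$u_i(h^i)\,\mu(E_i)>\mintEi   u_i(f)\,d\mu\qquad\text{for every }i\in J.$$
This is precisely the hypothesis required to invoke Lemma \ref{cmaggiore}, with the positive constant $c:=u_i(h^i)$ and the scalar function $u_i(f)$ on $E_i$; the alternative $u_i(f)\equiv u_i(h^i)$ is excluded by the strict inequality. The lemma then delivers a measurable set $A_i\subset E_i$ with $\mu(A_i)>0$ and $u_i(f(a))<u_i(h^i)$ on $A_i$, i.e.\ $h^i\succ_a f(a)$ on $A_i$.

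The step I expect to be the main obstacle is feasibility: the naive coalition $\bigcup_iA_i$ with allocation $h^i$ on each $A_i$ need not satisfy $\mu_{g'}(S')=\mu_e(S')$, because the measures $\mu(A_i)$ are not in general proportional to $\mu(S\cap E_i)$, while the only balance at my disposal, $\sum_{i\in J} h^i\mu(S\cap E_i)=\mu_e(S)$, holds precisely in those proportions. The remedy is a common rescaling: set $\alpha:=\min_{i\in J}\mu(A_i)/\mu(S\cap E_i)>0$ and, invoking the semiconvexity Lemma \ref{CM2.1}, select $B_i\subset A_i$ with $\mu(B_i)=\alpha\,\mu(S\cap E_i)$ for each $i\in J$. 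With $S':=\bigcup_{i\in J}B_i$ and $g':=h^i$ on $B_i$ (extended by $e$ outside $S'$), Proposition \ref{intadditivo} yields $\mu_{g'}(S')=\sum_{i\in J} h^i\mu(B_i)=\alpha\sum_{i\in J}h^i\mu(S\cap E_i)=\alpha\,\mu_g(S)=\alpha\,\mu_e(S)=\mu_e(S')$, while preferences hold on every $B_i\subset A_i$ by construction. Hence $(S',g')$ improves $f$, contradicting $f\in \CEs$.
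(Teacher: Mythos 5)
Your proof is correct, but it takes a genuinely different route from the paper's. The paper does not attack $\CEs$ directly: it shows that $\overline{f}$ cannot be \emph{strongly} improved, i.e.\ $\overline{f}\in\LCEs$, and then invokes the chain ``simple $+$ $\LCEs \Rightarrow \WEs \Rightarrow \CEs$'' (Theorems \ref{finale} and \ref{WESCES}). Because strong improvement balances $g$ against $e$ on each $A\cap E_i$ separately, the conditional average of $g$ over $A\cap E_i$ is exactly $e_i$, so the paper's Jensen step yields $u_i(e^i)>u_i(w^i)$ and the blocking allocation against $f$ can be taken to be the initial endowment $e$ itself; feasibility of the blocking pair is then automatic and no rescaling is needed. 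You instead start from an ordinary improving pair for $\overline{f}$ (aggregate balance only), so the conditional averages $h^i$ play the role of $e_i$, and you must restore feasibility of $\bigcup_i A_i$ by shrinking each $A_i$ to measure $\alpha\mu(S\cap E_i)$; that step is legitimate, since $\alpha\mu(S\cap E_i)\le\mu(A_i)$ by the choice of $\alpha$ and, by semiconvexity and Lemma \ref{CM2.1}, the range of $\mu$ restricted to $A_i$ is the whole interval $[0,\mu(A_i)]$. What each approach buys: yours is self-contained, never using the Core--Walras machinery of Theorems \ref{WESCES} and \ref{finale} (nor, apparently, the homogeneity hypothesis {\bf u2)}), while the paper's route produces the stronger by-product $\overline{f}\in\LCEs$, hence $\overline{f}\in\WEs$, which is exploited later (e.g.\ in Theorem \ref{corengrato}). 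The three analytic ingredients --- Lemma \ref{eq-zero}, the Jensen inequality of Theorem \ref{jensen2}, and Lemma \ref{cmaggiore} --- are used in essentially the same roles in both arguments.
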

\begin{proof} 
As  already observed, it will suffice to show that $\overline{f}\in \LCEs$. Assume by contradiction that there 
exist a coalition $A$ with positive measure and an allocation $g$ such that $g(a)\succ_a \overline{f}(a)$ for
 all $a\in A$ and moreover $\mintAEi gd\mu =\mintAEi e d\mu$ for all $i=1,...,r$. 
Now, fix any index $i$ such that $\mu(A\cap E_i)>0$, and denote by $w^i$ the constant value of 
$\overline{f}$ in $E_i$: we have $u_i(g(a))>u_i(w^i)$ for every $a\in A\cap E_i$ and
$$\mintAEi u_i(g) d\mu>u_i(w_i) \mu(A\cap E_i)\qquad {\rm and } \qquad \mintAEi  gd\mu=
e^i\mu(A\cap E_i).$$
Now, thanks to the properties {\bf u1}) and {\bf u2}), it is
\begin{eqnarray*}
u_i(e^i) &=& u_i\left(\dfrac{1}{\mu(A\cap E_i)}\mintAEi e^i d\mu\right)=u_i\left(\dfrac{1}{\mu(A\cap E_i)}
\mintAEi g d\mu\right)\geq \\
&\geq& \dfrac{1}{\mu(A\cap E_i)}\mintAEi  u(g) d\mu>
\dfrac{1}{\mu(A\cap E_i)}u_i(w_i)\mu(A\cap E_i)=u_i(w^i).
\end{eqnarray*}
This clearly means that $e$ improves $\overline{f}$ in $E_i$. The initial endowment $e$  improves also $f$ in 
some coalition $S\subset E_i$: indeed this is obvious if $u_i(f)=u_i(w^i)$ in $E_i$; otherwise, by concavity,
\begin{eqnarray*}
u_i(w^i)=u_i\left(\dfrac{\mintEi   f d\mu}{\mu(E_i)}\right)\geq \frac{1}{\mu(E_i)}\mintEi  u_i(f)d\mu,
\end{eqnarray*}
i.e. $\mintEi  u_i(f)d\mu\leq \mintEi  u_i(w^i)d\mu$. Now, by Lemma \ref{cmaggiore}  there exists a
 measurable subset $S\subset E_i$, such that $u_i(f(s))<u_i(w^i)<u_i(e^i)$ in $S$: this means that $e$
 improves $f$ in $S$, and therefore $f\notin \CEs$; contradiction. 
\end{proof}

Also in this setting it is possible to  prove that $e\in \CEs$.

\begin{theorem}\label{enelcore}
In the situation described above, the initial endowment $e$ is in $\CEs$.
\end{theorem}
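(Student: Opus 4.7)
The plan is to first establish $e\in\LCEs$ and then apply Theorem \ref{finale}, which, since $e$ is simple, will upgrade this to $e\in\WEs$; Theorem \ref{WESCES} then yields $e\in\CEs$. The detour through $\LCEs$ rather than attacking $\CEs$ directly is deliberate: a strong improvement hypothesis forces per-type integral equalities $\mu_g(S\cap E_i)=\mu_e(S\cap E_i)$, which interact cleanly with the individual utility functions $u_i$, whereas the plain improvability condition only provides the single vector equation $\mu_g(S)=\mu_e(S)$ that aggregates across all types.

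So I would begin by assuming, toward a contradiction, the existence of a coalition $S$ and an allocation $g$ that strongly improves $e$. I will fix an index $i$ with $s_i:=\mu(S\cap E_i)>0$ (at least one such $i$ exists by $(\mathbf{H.0})$ since $\mu(S)>0$) and write $m_i:=\mintSEi g\,d\mu$. Constancy of $e$ on $E_i$ and the strong improvement condition give $m_i=e^i s_i$. On $S\cap E_i$ one has $g(a)\succ_i e^i$ $\mu$-a.e., hence $u_i(g(a))>u_i(e^i)$ $\mu$-a.e., and I would invoke Lemma \ref{eq-zero}, applied to the scalar function $u_i\circ g$ and the constant $u_i(e^i)$, to obtain the strict lower bound $\mintSEi u_i(g)\,d\mu>s_i u_i(e^i)$.

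For the matching upper bound I plan to use the supporting hyperplane from Definition \ref{concavendim} at the point $e^i$: there exist $a^i\in\erre^n_+$ and $c^i\in\erre$ with $u_i(e^i)=a^i\cdot e^i+c^i$ and $u_i(\tau)\leq a^i\cdot\tau+c^i$ for all $\tau\geq 0$. Combining monotonicity of the Choquet integral, subadditivity (available because $\mu$ is submodular), the translation property (\ref{De}.iv), and positive homogeneity (\ref{De}.ii), I expect to derive
\[
\mintSEi u_i(g)\,d\mu\leq a^i\cdot m_i+c^i s_i=s_i(a^i\cdot e^i+c^i)=s_i u_i(e^i),
\]
the crucial step being $m_i=e^i s_i$. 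This contradicts the strict lower bound, so $e\in\LCEs$, and Theorem \ref{finale} then applies because $e$ is simple and its upper contour sets $\{x:u_i(x)\geq u_i(e^i)\}$ are convex, closed, and invariant under translations by elements of $(\erre^n_+)^\circ$ (from concavity, continuity, and monotonicity of $u_i$). The main obstacle, and the reason to route through $\LCEs$, is that one cannot run this contradiction under the weaker aggregate equality $\sum_i m_i=\sum_i e^i s_i$: the supporting hyperplanes $a^i$ depend on $i$ and cannot be summed against a single vector identity, and the best one could extract from Jensen's inequality alone would be $u_i(m_i/s_i)>u_i(e^i)$ for each relevant $i$, which, lacking a common multiplier, does not close the argument.
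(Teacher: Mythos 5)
Your proposal is correct and follows essentially the same route as the paper: assume a strong improvement of $e$ on some coalition, fix a type $i$ with $\mu(S\cap E_i)>0$, use Lemma \ref{eq-zero} for the strict lower bound $\mintSEi u_i(g)\,d\mu>u_i(e^i)\mu(S\cap E_i)$, and use concavity of $u_i$ together with the per-type equality $\mintSEi g\,d\mu=e^i\mu(S\cap E_i)$ for the matching upper bound (the paper cites its Jensen-type inequality, which is proved by exactly the supporting-hyperplane computation you spell out), then pass from $\LCEs$ to $\WEs\subset\CEs$ via Theorems \ref{finale} and \ref{WESCES}.
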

\begin{proof} 
Assume that a coalition $A$ and an allocation $g$ exist, strongly improving $e$. Fix any index $i$ such that 
$\mu(E_i\cap A)>0$. Then, for all $a\in A\cap E_i$ we get
\begin{eqnarray*}
u_i(g(a))&>&u_i(e^i)=u_i\left(\dfrac{\mintAEi e^i d\mu}{\mu(E_i\cap A)}\right)=
u_i \left( \dfrac{\mintAEi  g d\mu}{\mu(E_i\cap A)} \right) \geq \\
&\geq& \dfrac{1}{\mu(E_i\cap A)}\mintAEi u_i(g) d\mu.
\end{eqnarray*}
Thanks to Lemma \ref{eq-zero}, integrating we have then
$$\mintAEi u_i(g) d\mu > \mintAEi u_i(g) d\mu$$
which is clearly absurd. So, $e\in \LCEs$, but this also implies that $e\in \WEs$ and $e\in \CEs$. 
\end{proof}

In the last situation, it is possible to describe what are precisely the {\em simple} elements in the core $\CEs$.
\begin{proposizione}\label{piccolocore}
In the setting outlined
above, for any feasible {\em simple} mapping $f:X\to \erre^n_+$,
 i.e. $f=\sum_{i=1}^r w^i 1_{E_i}$, where $w^i$ is constant for each $i$, the following are equivalent:
\begin{itemize}
\item[(\bf \ref{piccolocore}.1)]
$u_i(w^i)\geq u_i(e^i)$ for all $i=1,...,r$ and $u_j(w^j)= u_j(e^j)$ for at least one index $j$.
\item[(\bf \ref{piccolocore}.2)]
$f\in \CEs.$
\end{itemize}

\end{proposizione}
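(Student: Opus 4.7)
The plan is to reduce both directions of the equivalence to the fact, established in Theorem \ref{enelcore}, that $e\in C(\mathcal E)\cap W(\mathcal E)$.

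For (\ref{piccolocore}.1)$\Rightarrow$(\ref{piccolocore}.2) I argue by contradiction: if some coalition $S$ with an allocation $g$ improves $f$, then on each $S\cap E_i$ of positive measure $u_i(g(a))>u_i(w^i)\ge u_i(e^i)$, whence $g(a)\succ_i e(a)$ because $u_i$ represents $\succ_i$. Since the aggregate condition $\mu_g(S)=\mu_e(S)$ is the same, the pair $(S,g)$ would also improve $e$, contradicting $e\in C(\mathcal E)$. Only the first half of (\ref{piccolocore}.1) is used in this direction; the equality half will be recovered automatically by the converse.

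For (\ref{piccolocore}.2)$\Rightarrow$(\ref{piccolocore}.1) I proceed in two stages. First, to obtain $u_i(w^i)\ge u_i(e^i)$ for every $i$, I assume by contradiction that $u_j(w^j)<u_j(e^j)$ for some $j$ and exhibit the improving pair $(S,g)=(E_j,e)$: indeed $e^j\succ_j w^j$ while $\mu_e(E_j)=\mu_e(E_j)$ trivially, contradicting $f\in C(\mathcal E)$. Second, to force an index with equality, I suppose $u_i(w^i)>u_i(e^i)$ strictly for every $i$ and derive a contradiction using $e\in W(\mathcal E)$: there is a price $p\in\mathbb R^n_+\setminus\{0\}$ for which $x\succ_a e(a)$ implies $p\cdot x>p\cdot e(a)$. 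Applied to each $w^i\succ_i e^i$ this gives $p\cdot w^i>p\cdot e^i$, and summing against the weights $\mu(E_i)$, using Proposition \ref{corollario} together with feasibility of $f$, yields
\[
\sum_{i=1}^{r}\mu(E_i)\,p\cdot w^i>\sum_{i=1}^{r}\mu(E_i)\,p\cdot e^i=p\cdot\mu_e(X)=p\cdot\mu_f(X)=\sum_{i=1}^{r}\mu(E_i)\,p\cdot w^i,
\]
which is the desired absurdity.

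The only point that I expect to require some care is the necessity of invoking Theorem \ref{enelcore} in its full strength, namely $e\in W(\mathcal E)$ and not merely $e\in C(\mathcal E)$, since the Walras price is precisely what converts the pointwise strict preferences into the scalar inequality that feasibility immediately falsifies; apart from this, the rest is a mechanical chain of implications relying only on the representation of $\succ_i$ by $u_i$ and on Proposition \ref{corollario}.
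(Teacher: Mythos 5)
Your argument is correct, but it reaches the conclusion by a route that differs from the paper's at two points. For (\ref{piccolocore}.1)$\Rightarrow$(\ref{piccolocore}.2) the paper does not transfer an improvement of $f$ directly into one of $e$: it first shows that $f$ cannot be \emph{strongly} improved, i.e. $f\in LC(\mathcal{E})$, and then uses the fact that a simple element of $LC(\mathcal{E})$ lies in $W(\mathcal{E})\subset C(\mathcal{E})$ (Theorems \ref{finale} and \ref{WESCES}). Your version is more economical: the same pair $(S,g)$ that would improve $f$ also improves $e$, because $u_i(g(a))>u_i(w^i)\geq u_i(e^i)$ $\mu$-a.e.\ on $S$ while the aggregate constraint $\mu_g(S)=\mu_e(S)$ is untouched, so you contradict $e\in C(\mathcal{E})$ with no detour through the large core. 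For the ``equality at some index'' half of (\ref{piccolocore}.2)$\Rightarrow$(\ref{piccolocore}.1), the paper argues more elementarily than you do: if $u_i(w^i)>u_i(e^i)$ for every $i$, then the grand coalition $X$ together with the feasible allocation $f$ itself improves $e$, again contradicting Theorem \ref{enelcore}; this needs only $e\in C(\mathcal{E})$, whereas your price argument needs the stronger conclusion $e\in W(\mathcal{E})$. That stronger fact is indeed available (Theorem \ref{enelcore} ends by asserting $e\in W(\mathcal{E})$ as well), and your chain
\[
\sum_{i}\mu(E_i)\,p\cdot w^i>\sum_{i}\mu(E_i)\,p\cdot e^i=p\cdot\mu_e(X)=p\cdot\mu_f(X)=\sum_{i}\mu(E_i)\,p\cdot w^i
\]
is legitimate because $f$ and $e$ are constant on each $E_i$, so Proposition \ref{corollario}.b) applies; just note it silently uses $\mu(E_i)>0$ for at least one $i$ (true, since $\mu(X)>0$) and that the maximality of $e(a)$ in $B_p(a)$ holds only $\mu$-a.e., so you must pick a witness $a\in E_i$ outside the exceptional null set, which again requires $\mu(E_i)>0$. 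The step excluding an index with $u_j(w^j)<u_j(e^j)$ coincides with the paper's. In short: both proofs hinge entirely on Theorem \ref{enelcore}; yours trades the paper's reliance on the $LC$--$W$--$C$ machinery in one direction for a reliance on the Walrasian price of $e$ in the other.
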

\begin{proof} First, assume that $f\in \CEs$.  
If there exists an index $j$ for which $u_j(w^j)< u_j(e^j)$, then  clearly $e$ improves $f$ in $E_j$, which is impossible. 
Therefore, $u_i(w^i)\geq u_i(e^i)$ for all $i$. However, if $u_i(w^i)> u_i(e^i)$ for all $i$, then $f$ would improve $e$ in 
the whole space, since $f$ is feasible, and this is in contrast with Theorem \ref{enelcore}. So, there exists at least an index $j$ such that $u_j(w^j)=u_j(e^j)$.
\acr
Now,  assume that $u_i(w^i)\geq u_i(e^i)$ for all $i$, and prove that $f\in \LCEs$: since $f$ is {\em simple}, this will imply
 that $f\in \CEs$. By contradiction, if an integrable allocation $g$ strongly improves $f$ in a coalition $S$, then there exists 
an index $i$ such that $\mu(S\cap E_i)>0$, and $u_i(g(s))>u_i(w^i)\geq u_i(e^i)$ for all $s\in S\cap E_i$.\acr
 Moreover, $\mintSEi   gd\mu=\mintSEi   e d\mu$: so, $g$ improves $e$ in $S\cap E_i$, which is impossible thanks to 
Proposition \ref{enelcore}. \acr
\end{proof}

In the sequel,  the following restrictions to the utility functions $u_i$ are imposed: 
 they are all the same concave function $u$, and  $u$ is (positively) linear in the subspace generated by the vectors 
$e^i:= e \uno_{E_i}, i=1,...,r$. In this situation  it is clear that, for all measurable sets $A$, it is:
\begin{eqnarray*}
\mintA  u(e) d\mu&=&\sum_i \mintAEi u(e) d\mu=\sum_i u(e^i)\mu(A\cap E_i)= \\ &=&
u(\sum_i e^i \mu(A\cap E_i))= u(\mintA e d\mu). 
\end{eqnarray*}
Finally, it will be proven that, in the last restrictive hypotheses on the preferences, every allocation in the core is necessarily {\em simple}, and therefore $\CEs=\WEs$.

\begin{theorem}\label{corengrato}
Under  the previous conditions on the preferences, for every integrable allocation $f$ the following are equivalent:
\begin{itemize}
\item[(\ref{corengrato}.1)] $f\in \WEs$;
\item[(\ref{corengrato}.2)] $f\in \CEs$;
\item[(\ref{corengrato}.3)] $f=\overline{f}$ and $u(f)=u(e)\quad \mu$-a.e.
\end{itemize}
 \end{theorem}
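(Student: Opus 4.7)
The plan is to establish the three implications in the order $(3)\Rightarrow(1)\Rightarrow(2)\Rightarrow(3)$. The implication $(1)\Rightarrow(2)$ is immediate from Theorem~\ref{WESCES}. For $(3)\Rightarrow(1)$, if $f=\overline{f}$ $\mu$-a.e.\ then $f$ is essentially the simple allocation $\sommar w^{i}\uno_{E_{i}}$, and the condition $u(f)=u(e)$ $\mu$-a.e.\ translates into $u(w^{i})=u(e^{i})$ for every $i$. Proposition~\ref{piccolocore} then yields $f\in \CEs$, and since $f$ is simple, Theorem~\ref{finale} provides $f\in \WEs$.

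For the main direction $(2)\Rightarrow(3)$, assume $f\in \CEs$ and set $w^{i}:= \mintEi f\, d\mu/\mu(E_{i})$, so $\overline{f}=\sommar w^{i}\uno_{E_{i}}$. First I would invoke Theorem~\ref{ultimo} to obtain $\overline{f}\in \CEs$ and Proposition~\ref{piccolocore} to deduce $u(w^{i})\geq u(e^{i})$ for every $i$. Combining the feasibility $\sommar w^{i}\mu(E_{i})=\sommar e^{i}\mu(E_{i})$ with the concavity of $u$ and the positive linearity of $u$ on the cone generated by the $e^{i}$'s yields
\[
\sommar u(e^{i})\mu(E_{i}) = \mu(X)\,u\!\left(\tfrac{1}{\mu(X)}\sommar e^{i}\mu(E_{i})\right) = \mu(X)\,u\!\left(\tfrac{1}{\mu(X)}\sommar w^{i}\mu(E_{i})\right) \geq \sommar u(w^{i})\mu(E_{i}),
\]
which together with the pointwise bound forces $u(w^{i})=u(e^{i})$ for every $i$.

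Next I would deduce $u(f)=u(e)$ $\mu$-a.e. By the Jensen-type inequality of Theorem~\ref{jensen2} (applied to the rescaled measure on each $E_{i}$)
\[
\tfrac{1}{\mu(E_{i})}\mintEi u(f)\,d\mu \;\leq\; u(w^{i})\;=\;u(e^{i}),
\]
so that $\mintEi u(f)\,d\mu\leq u(e^{i})\mu(E_{i})=\mintEi u(e)\,d\mu$. Lemma~\ref{cmaggiore} with $c=u(e^{i})$ then leaves only two possibilities on $E_{i}$: either $u(f)=u(e^{i})=u(e)$ $\mu$-a.e.\ on $E_{i}$, or there exists $S\subset E_{i}$ with $\mu(S)>0$ on which $u(f)<u(e^{i})$. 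The second case lets $e$ strictly improve $f$ via the coalition $S$ (since $\mu_{e}(S)=\mu_{e}(S)$ trivially and $u(e)>u(f)$ on $S$), contradicting $f\in \CEs$. Hence $u(f)=u(e)$ $\mu$-a.e.\ on $X$.

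The main obstacle is the remaining equality $f=\overline{f}$ $\mu$-a.e. I expect to extract it from the equality case in the Jensen chain above: tracing the proof of Theorem~\ref{jensen2}, equality forces $u(f)=a\cdot f+c$ $\mu$-a.e.\ on each $E_{i}$, where $(a,c)$ is the linearization of $u$ at $w^{i}$ (Definition~\ref{concavendim}). Because $u$ is positively linear only along the cone of the $e^{i}$'s while being genuinely concave in transverse directions, this identification pins the values of $f$ inside the support hyperplane at $w^{i}$, and the prescribed Choquet average $\mintEi f\,d\mu=w^{i}\mu(E_{i})$ then fixes $f\equiv w^{i}$ $\mu$-a.e.\ via Proposition~\ref{muf} applied to $\mu_{p\cdot f}$ and $\mu_{p\cdot \overline{f}}$ for the separating price $p$ furnished by Lemma~\ref{separazione}. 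Making this final identification rigorous in the Choquet (non-additive) setting, and ruling out tangential ``utility-equivalent'' deviations, is the delicate point where the structural hypothesis on $u$ must be used in full.
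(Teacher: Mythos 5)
Your cycle of implications, and almost every step in it, coincides with the paper's: $(1)\Rightarrow(2)$ by Theorem~\ref{WESCES}; $(3)\Rightarrow(1)$ by Proposition~\ref{piccolocore} followed by Theorem~\ref{finale}; and for $(2)\Rightarrow(3)$ the paper likewise passes to $\overline{f}\in \CEs$ via Theorem~\ref{ultimo}, gets $u(w^i)\geq u(e^i)$ from Proposition~\ref{piccolocore}, reverses the aggregate inequality using feasibility, concavity and the linearity of $u$ on the cone of the $e^i$'s to force $u(w^i)=u(e^i)$ for all $i$, and then runs exactly your Lemma~\ref{cmaggiore} dichotomy on each $E_i$ (the alternative $u(f)<u(w^i)=u(e^i)$ on a positive-measure $S$ is excluded because the coalition $S$ together with the allocation $e$ would improve $f$). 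Up to and including the conclusion $u(f)=u(e)$ $\mu$-a.e., your argument is the paper's argument.

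The genuine gap is the step $f=\overline{f}$ $\mu$-a.e., which you correctly identify as the delicate point but do not prove. Your proposed fix does not work as stated: the hypotheses {\bf u1)}--{\bf u2)} (and the later restriction that $u$ is concave, increasing and positively linear on the span of the $e^i$'s) nowhere assert that $u$ is \emph{strictly} concave in directions transverse to that cone, so the ``equality case in Jensen'' cannot pin $f$ down to a single point of the level set of $u$; likewise, Proposition~\ref{muf} applied to $\mu_{p\cdot f}$ and $\mu_{p\cdot\overline{f}}$ would only yield $p\cdot f=p\cdot\overline{f}$ $\mu$-a.e., not $f=\overline{f}$. You should be aware, however, that the paper's own treatment of this step is no stronger: it assumes $f\neq w^i$ on a positive-measure subset of $E_i$, applies Lemma~\ref{cmaggiore} to the scalar function $u(f)$ with $c=u(w^i)$, and rules out the second alternative --- but the first alternative of that lemma only gives $u(f)\equiv u(w^i)$ $\mu$-a.e.\ on $E_i$, from which $f\equiv w^i$ does not follow unless one can invert $u$ on the relevant level set. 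So your instinct that something extra is needed here is sound; as a proof attempt, though, the proposal is incomplete at precisely this point, and the missing lemma (passing from equality of utilities to equality of allocations) is not supplied by the stated hypotheses.
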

\begin{proof}
 It will be  proven that
(\ref{corengrato}.1)$\Rightarrow$ (\ref{corengrato}.2)$\Rightarrow$(\ref{corengrato}.3)$\Rightarrow$ (\ref{corengrato}.1).
Thanks to Theorem \ref{WESCES}, it's clear that (\ref{corengrato}.1)$\Rightarrow$ (\ref{corengrato}.2).
Implication (\ref{corengrato}.2) $\Rightarrow$ (\ref{corengrato}.3) will be proven now.
\acr
\noindent Fix any allocation $f\in \CEs$. Then, thanks to Theorem \ref{ultimo}, it is $\overline{f}\in \CEs$. 
The equality $u(w^i)=u(e^i)$ for all $i$, where as usual $w^i$ denotes the constant value of the average function $\overline{f}$ in $E_i$, will be proven first. 
Since $\overline{f}$ is feasible and $u$ is concave, it is
$$\dfrac{\mintX u(\overline{f})d\mu}{\mu(X)}\leq u\left(\dfrac{\mintX \overline{f}d\mu}{\mu(X)}\right)=u\left(\dfrac{\mintX e d\mu}{\mu(X)}\right)=\dfrac{\mintX u(e) d\mu}{\mu(X)}.$$
So,
$$\sum_iu(w^i)\mu(E_i)\leq\sum_iu(e^i)\mu(E_i).$$
But it is known that $u(w^i)\geq u(e^i)$ for all $i$, thanks to Proposition \ref{piccolocore}, hence the last inequality would be violated
 if $u(w^i)> u(e^i)$ for some index $i$: summarizing, it is $u(e^i)=u(w^i)$ for all $i=1,...,r$.
\acr
Now  $f=\overline{f}$ $\mu$-a.e. will be proven: otherwise,
there exist an index $i$ and a measurable set $A\subset E_i$, such that $\mu(A)>0$ and $f \uno_A(a)\neq w^i$ for all $a\in A$.
 For all $a\in E_i$ it is
$$u(\overline{f}(a))=u\left(\dfrac{\mintEi  fd\mu}{\mu(E_i)}\right)\geq \dfrac{1}{\mu(E_i)}\mintEi  u(f)d\mu,$$
since $u$ is concave. Thus
$$\mintEi   u(f)d\mu\leq \mu(E_i), \quad u(\overline{f})=\mintEi  u(w^i) d\mu.$$
Thanks to Lemma \ref{cmaggiore}  there exists a measurable set $S\subset E_i$ with $\mu(S)>0$ and $u(f(a))<u(w^i)$ for all $a\in S$. But 
$u(w^i)=u(e^i)$, for all $i$, hence
$$u(f(a))<u(e^i)$$
holds true, for all $a\in S$, which shows that the coalition $S$ and the allocation $e$ improve $f$: contradiction, since $f\in \CEs$.
Then it is possible to conclude that $f=\overline{f}$ $\mu$-a.e. and the second implication is proved. 
 Finally, if (\ref{corengrato}.3) holds true, $f=\overline{f}\in \CEs$ thanks to Proposition \ref{piccolocore}, and also $f\in \WEs$ thanks
 to Theorem \ref{finale}. Thus, also the last implication is demonstrated.
\end{proof} 

\end{document}